\theoremstyle{definition}
\newtheorem{theorem}{Theorem}[section]
\newtheorem{lemma}[theorem]{Lemma}
\newtheorem{corollary}[theorem]{Corollary}
\newtheorem{proposition}[theorem]{Proposition}
\newtheorem{definition}[theorem]{Definition}
\newtheorem{remark}[theorem]{Remark}
\newtheorem{example}[theorem]{Example}
\def\C{{\mathbbm C}}
\def\Q{{\mathbbm Q}}
\def\N{{\mathbbm N}}
\def\R{{\mathbbm R}}
\def\Z{{\mathbbm Z}}
\def\ie{{\sl i.e.\/}}
\def\eg{{\sl e.g.\/}}
\let\phi=\varphi
\let\theta=\vartheta
\let\epsilon=\varepsilon
\def\Char{\mathop{\rm char}\nolimits}
\def\id{\mathop{\rm id}\nolimits}
\def\Frac{\mathop{\rm Frac}\nolimits}
\def\Aut{\mathop{\rm Aut}\nolimits}
\renewcommand{\det}{\mathop{\rm det}\nolimits}
\def\lim{\mathop{\rm lim}\limits}
\let\hat=\widehat
\let\tilde=\widetilde
\def\pprime{{\prime\prime}}
\def\ppprime{{\prime\prime\prime}}
\def\q#1{{[#1]}_q}
\numberwithin{equation}{section}
\newfont{\@aidxte}{cmsy10}
\newfont{\@aidxel}{cmsy10 scaled 1095}
\newfont{\@aidxtw}{cmsy10 scaled 1200}
\newlength\@aidxtexvi
\newlength\@aidxtexvii
\newlength\@aidxelxvi
\newlength\@aidxelxvii
\newlength\@aidxtwxvi
\newlength\@aidxtwxvii
\newcommand{\alignidx}[1]{%
\@aidxtexvi=\fontdimen16\@aidxte
\@aidxtexvii=\fontdimen17\@aidxte
\@aidxelxvi=\fontdimen16\@aidxel
\@aidxelxvii=\fontdimen17\@aidxel
\@aidxtwxvi=\fontdimen16\@aidxtw
\@aidxtwxvii=\fontdimen17\@aidxtw
{\mbox{$%
\fontdimen16\@aidxte=2.9pt
\fontdimen17\@aidxte=2.9pt
\fontdimen16\@aidxel=3.1pt
\fontdimen17\@aidxel=3.1pt
\fontdimen16\@aidxtw=3.3pt
\fontdimen17\@aidxtw=3.3pt
#1$}}%
\fontdimen16\@aidxte=\@aidxtexvi
\fontdimen17\@aidxte=\@aidxtexvii
\fontdimen16\@aidxel=\@aidxelxvi
\fontdimen17\@aidxel=\@aidxelxvii
\fontdimen16\@aidxtw=\@aidxtwxvi
\fontdimen17\@aidxtw=\@aidxtwxvii}
\newenvironment{myenumerate}{%
\begin{enumerate}
\setlength{\partopsep}{0pt}
\setlength{\parskip}{0pt}}{\end{enumerate}}
\def\nn{\notag}
\def\emph#1{{\sl #1\/}}
\def\sym#1{{\mathcal #1}}
\def\bar#1{\overline{#1}}%
\def\op{\mathrm{op}}
\def\cop{\mathrm{cop}}
\def\Vect{\mathbf{Vect}}
\def\msc#1{\noindent{\small Mathematics Subject Classification (2010):
#1\par}}%
\def\keywords#1{\noindent {\small keywords: #1\par}}%
\begin{document}

\title{Weak Bialgebras of Fractions}
\author{Steve Bennoun\thanks{E-mail: \texttt{bennoun@math.ubc.ca}} and
Hendryk Pfeiffer\thanks{E-mail: \texttt{pfeiffer@math.ubc.ca}}}
\date{\small{Department of Mathematics, The University of British Columbia,\\
1984 Mathematics Road, Vancouver, BC, V2T 1Z2, Canada}\\[1ex]
December 23, 2012}

\maketitle

\begin{abstract}

We construct the algebra of fractions of a Weak Bialgebra relative to a
suitable denominator set of group-like elements that is \emph{almost
central}, a condition we introduce in the present article which is
sufficient in order to guarantee existence of the algebra of fractions and
to render it a Weak Bialgebra. The monoid of all group-like elements of a
coquasi-triangular Weak Bialgebra, for example, forms a suitable set of
denominators as does any monoid of central group-like elements of an
arbitrary Weak Bialgebra. We use this technique in order to construct new
Weak Bialgebras whose categories of finite-dimensional comodules relate to
$SL_2$-fusion categories in the same way as $GL(2)$ relates to $SL(2)$.

\end{abstract}

\msc{%
16S85,
16T05,
16T10,
}
\keywords{Weak bialgebra, Weak Hopf algebra, bialgebra, Hopf algebra, Ring of
Fractions, Ore Condition, Localization, Modular Category, Fusion Category,
Manin Envelope, Hopf Closure}

\section{Introduction}

The notion of the field of fractions $\Frac(R)$ of an integral domain $R$ can
be generalized in various directions. In the case of the field of fractions,
the ring $R$ is commutative and free of zero-divisors, and the set of
denominators is the set of all regular elements $R\backslash\{0\}$. Before we
give an outline of the present manuscript, we sketch how the notion of the
field of fractions has been extended to more general rings $R$, to more
general sets of denominators, and finally to rings and algebras with
additional structure.

The notion of the field of fractions can be generalized to more general rings
$R$ as follows, see, for example~\cite{Or31,Be99}. Let $R$ be an associative
unital ring, not necessarily commutative, and $S\subseteq R$ be the set of
regular elements of $R$, \ie\ $S$ consists of all elements $s\in R$ such that
for all $r\in R$, the condition that $sr=0$ or $rs=0$, implies that $r=0$. If
$R$ satisfies the \emph{right Ore condition}
\begin{myenumerate}
\item[(O1)]
For all $a\in R$ and $s\in S$, there exist $b\in R$ and $t\in S$ such that $sb=at$,
\end{myenumerate}
then there exists a right ring of fractions $R[S^{-1}]$ that generalizes the
notion of the field of fractions. It comes with an injective ring homomorphism
$\phi\colon R\to R[S^{-1}]$ and satisfies the following universal property:
For each ring homomorphism $\psi\colon R\to T$ such that $\psi(s)$ is
invertible in $T$ for all $s\in S$, there exists a unique ring homomorphism
$\sigma\colon R[S^{-1}]\to T$ such that the following diagram commutes:
\begin{equation}
\label{eq_preuniversal}
\begin{aligned}
\xymatrix{
R\ar[rr]^{\phi}\ar[ddrr]_{\psi}&&R[S^{-1}]\ar[dd]^{\sigma}\\
\\
&&T.
}
\end{aligned}
\end{equation}
If $R$ is commutative or, more generally, if the denominators are central,
\ie\ $S\subseteq Z(R)$, the right Ore condition is automatically satisfied.

It is possible to generalize this construction even further to denominators
that are not necessarily regular, see, for example~\cite{St75}. In this case,
$\phi\colon R\to R[S^{-1}]$ will in general no longer be injective. Let $R$ be
an associative unital ring and $S\subseteq R$ be a multiplicative
submonoid. Then the right ring of fractions $R[S^{-1}]$ exists if and only if
the right Ore condition~(O1) and the following condition holds:
\begin{myenumerate}
\item[(S2)]
For all $s\in S$ and $a\in R$ with $sa=0$, there exists some $t\in S$ such that
$at=0$.
\end{myenumerate}
Again, if $R$ is commutative or, more generally, if the denominators are
central, $S\subseteq Z(R)$, the conditions~(O1) and~(S2) are satisfied
automatically.

A second direction of generalizations studies additional structure on the ring
$R$. Hayashi~\cite{Ha92a} has shown that the conditions~(O1) and~(S2) hold in
coquasi-triangular bialgebras $R$. This can be seen as a weakening of the
assumptions in the case in which it is the commutativity of $R$ that implies
the right Ore condition. In order for the algebra of fractions $R[S^{-1}]$ to
admit a bialgebra structure, the denominators $S$ are required to be
group-like.

In the present article, we extend this direction of generalizations further
from bialgebras to Weak Bialgebras and prove
\begin{theorem}
Let $R$ be a Weak Bialgebra and $S\subseteq R$ be a multiplicative monoid of
group-like elements that is \emph{almost central} (a condition that we
introduce for this purpose in Definition~\ref{def_almostcentral}). Then the
right ring of fractions $R[S^{-1}]$ exists, forms a Weak Bialgebra, and
satisfies the universal property~\eqref{eq_preuniversal} in the category of
Weak Bialgebras.
\end{theorem}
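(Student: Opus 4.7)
The proof will proceed in three stages. The first step is to establish the existence of $R[S^{-1}]$ as a ring by verifying the Ore condition~(O1) and the auxiliary condition~(S2) for the monoid $S$. The almost-central hypothesis in Definition~\ref{def_almostcentral} is presumably designed precisely to afford, for every $a\in R$ and $s\in S$, elements $b\in R$ and $t\in S$ with $sb=at$, and to control left annihilators by $S$ in the sense of~(S2). Once these hold, the classical Ore localization machinery yields $R[S^{-1}]$ together with the canonical ring map $\phi\colon R\to R[S^{-1}]$ and its universal property among ring homomorphisms inverting $S$.

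The second step is to lift the comultiplication. Since each $s\in S$ is group-like, $\Delta(s)=s\otimes s$, and so $(\phi\otimes\phi)(s)=\phi(s)\otimes\phi(s)$ is invertible in $R[S^{-1}]\otimes R[S^{-1}]$ with inverse $\phi(s^{-1})\otimes\phi(s^{-1})$. Applying the universal property of $\phi$ to the ring homomorphism $(\phi\otimes\phi)\circ\Delta$ produces a unique ring homomorphism $\tilde\Delta\colon R[S^{-1}]\to R[S^{-1}]\otimes R[S^{-1}]$ extending $\Delta$; coassociativity of $\tilde\Delta$ then follows from coassociativity of $\Delta$ via the uniqueness clause of the universal property applied to the two iterated comultiplications. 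Multiplicativity of $\tilde\Delta$, which is the bialgebra axiom shared by weak bialgebras, is automatic from the construction.

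The main obstacle lies in the third step: extending the counit and verifying the \emph{weak} bialgebra axioms. Because $\epsilon$ is not a ring homomorphism in a weak bialgebra, the Ore universal property cannot be invoked directly; instead, I would define $\tilde\epsilon(as^{-1})$ on representatives by a formula built from $\epsilon$ and the group-like structure of $s$, and prove well-definedness on Ore equivalence classes using both the almost-central hypothesis and the weak multiplicativity of $\epsilon$ on $R$. With $\tilde\Delta$ and $\tilde\epsilon$ in hand, the remaining weak-bialgebra axioms (the axiom on $\tilde\Delta(1)$ and the two weak multiplicativity identities for $\tilde\epsilon$) reduce to their analogues on $R$ via the group-like and almost-central properties of $S$. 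Finally, the universal property in the category of weak bialgebras follows automatically: any weak-bialgebra morphism $\psi\colon R\to T$ inverting $S$ factors uniquely through $\phi$ as a ring map $\sigma$ by the Ore universal property, and compatibility of $\sigma$ with comultiplication and counit is then forced because $\tilde\Delta$ and $\tilde\epsilon$ were characterised as the unique extensions of $\Delta$ and $\epsilon$ along the epimorphism $\phi$.
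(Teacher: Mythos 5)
There is a genuine gap, and it sits exactly where the ``weak'' in Weak Bialgebra matters: your Step 2 does not work. In a WBA a group-like element $s$ satisfies $\Delta(s)=s1^\prime\otimes s1^\pprime=1^\prime s\otimes 1^\pprime s$ together with $\epsilon_s(s)=1=\epsilon_t(s)$; it does \emph{not} satisfy $\Delta(s)=s\otimes s$ unless $R$ is an honest bialgebra. Moreover $\Delta$ is multiplicative but not unital ($\Delta(1)\neq 1\otimes 1$ in general), so $(\phi\otimes\phi)\circ\Delta\colon R\to R[S^{-1}]\otimes R[S^{-1}]$ is not a homomorphism of unital rings, and $(\phi\otimes\phi)(\Delta(s))$ equals $\bigl(\phi(s)\otimes\phi(s)\bigr)\cdot(\phi\otimes\phi)(\Delta(1))$, a multiple of a non-identity idempotent, hence not invertible. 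Thus the localization universal property (Proposition~\ref{prop_universal}) cannot be invoked to manufacture $\tilde\Delta$, and even the target formula $\tilde\Delta(\phi(s)^{-1})=\phi(s)^{-1}\otimes\phi(s)^{-1}$ is inconsistent with the weak axioms: in $R[S^{-1}]$ the inverse of a group-like is again group-like, so its coproduct must be $\phi(s)^{-1}1^\prime\otimes\phi(s)^{-1}1^\pprime$. Your construction is the standard one for ordinary bialgebras, which is precisely the special case the theorem is meant to extend.

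The paper instead defines both structure maps on representatives, $\Delta\bigl(\frac{x}{g}\bigr)=\frac{x^\prime}{g}\otimes\frac{x^\pprime}{g}$ and $\epsilon\bigl(\frac{x}{g}\bigr)=\epsilon(x)$, and the essential technical input for well-definedness is Lemma~\ref{la_simplify}: using almost centrality one shows that Ore-equivalent pairs $(x,g)\sim(y,h)$ can always be linked by elements $c,d$ lying in $S$ itself, not merely in $R$ --- a point that is not automatic here because the denominators need not be regular (cf.\ Example~\ref{ex_nonreg}) --- after which right multiplication by a group-like intertwines $\Delta$ and preserves $\epsilon$, giving well-definedness. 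Your sketch for $\tilde\epsilon$ points vaguely in this direction but does not identify this lemma, and you would need it for $\Delta$ as well once the universal-property route is abandoned. Finally, the last step cannot rest on ``$\tilde\Delta$ and $\tilde\epsilon$ are the unique extensions along the epimorphism $\phi$,'' since no such characterization has been established; the paper verifies directly that $\sigma\bigl(\frac{x}{g}\bigr)=\psi(x)\psi(g)^{-1}$ is compatible with comultiplication and counit, the key point being that $\psi(g)^{-1}$ is group-like in $L$ as the inverse of the group-like $\psi(g)$. Your Step 1 (existence of the ring of fractions via (S1) and (S2) from almost centrality) does match the paper's Theorem~\ref{thm_localizewba1}.
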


Our condition of being \emph{almost central} is a consequence of Hayashi's
assumptions whenever $R$ is a coquasi-triangular bialgebra. It implies the
conditions~(O1) and~(S2) in this case, and also if $R$ is commutative or if
$S$ is central.

Weak Bialgebras are relevant to the study of modular categories~\cite{Pf09a}
and, more generally, fusion categories, see, for example~\cite{Os03a,Pf11}. We
use our construction of Weak Bialgebras of fractions in order to introduce
new Weak Bialgebras whose categories of finite-dimensional comodules relate to
$SL_2$-fusion categories in the same way as $GL(2)$ relates to $SL(2)$.

The present article is organized as follows. In Section~\ref{sect_prelim}, we
review the basics of Weak Bialgebras and of localization in rings. In
Section~\ref{sect_new}, we construct Weak Bialgebras of fractions and relate
our results to some known special cases and to the weak Hopf envelope. In
Section~\ref{sect_example}, we finally present a number of examples in order
to probe the assumptions of our main theorems. We also use our results in
order to construct new Weak Bialgebras related to $SL_2$-fusion categories.

\section{Preliminaries}
\label{sect_prelim}

In this section, we fix our notation and review some background
material on Weak Bialgebras (WBAs), Weak Hopf Algebras (WHAs) and on the
classical theory of associative rings of fractions.

\subsection{Weak Bialgebras and Weak Hopf Algebras}

For the basics of Weak Bialgebras (WBAs) and Weak Hopf Algebras (WHAs), we
refer to~\cite{BoNi99,BoSz00} and for coquasi-triangular WBAs to~\cite{Pf09a}.

\begin{definition}
A \emph{Weak Bialgebra} $(H,\mu,\eta,\Delta,\epsilon)$ over a field
$k$ is a $k$-vector space $H$ such that
\begin{myenumerate}
\item
$(H,\mu,\eta)$ forms an associative algebra with multiplication $\mu\colon
H\otimes H\to H$ and unit $\eta\colon k\to H$,
\item
$(H,\Delta,\epsilon)$ forms a coassociative coalgebra with comultiplication
$\Delta\colon H\to H\otimes H$ and counit $\epsilon\colon H\to k$,
\item
the following compatibility conditions hold:
\begin{eqnarray}
\Delta\circ\mu
&=& (\mu\otimes\mu)\circ(\id_H\otimes\sigma_{H,H}\otimes\id_H)\circ(\Delta\otimes\Delta),\\
\epsilon\circ\mu\circ(\mu\otimes\id_H)
&=& (\epsilon\otimes\epsilon)\circ(\mu\otimes\mu)\circ(\id_H\otimes\Delta\otimes\id_H)\nn\\
&=& (\epsilon\otimes\epsilon)\circ(\mu\otimes\mu)\circ(\id_H\otimes\Delta^\op\otimes\id_H),\\
(\Delta\otimes\id_H)\circ\Delta\circ\eta
&=& (\id_H\otimes\mu\otimes\id_H)\circ(\Delta\otimes\Delta)\circ(\eta\otimes\eta)\nn\\
&=& (\id_H\otimes\mu^\op\otimes\id_H)\circ(\Delta\otimes\Delta)\circ(\eta\otimes\eta).
\end{eqnarray}
\end{myenumerate}
Here $\sigma_{V,W}\colon V\otimes W\to W\otimes V$, $v\otimes w\mapsto
w\otimes v$ is the transposition of the tensor factors, and by
$\Delta^\op=\sigma_{H,H}\circ\Delta$ and $\mu^\op=\mu\circ\sigma_{H,H}$, we
denote the \emph{opposite comultiplication} and \emph{opposite
multiplication}, respectively. We tacitly identify the vector spaces
$(V\otimes W)\otimes U\cong V\otimes(W\otimes U)$ and $V\otimes k\cong V\cong
k\otimes V$, exploiting the coherence theorem for the monoidal category
of vector spaces.

A \emph{homomorphism} $\phi\colon H\to H^\prime$ of WBAs over the same field
$k$ is a $k$-linear map that is a homomorphism of unital algebras as well as a
homomorphism of counital coalgebras.
\end{definition}

In a WBA $H$, there are two important linear idempotents, the \emph{source
counital map}
\begin{equation}
\epsilon_s:=(\id_H\otimes\epsilon)\circ(\id_H\otimes\mu)\circ(\sigma_{H,H}\otimes\id_H)
\circ(\id_H\otimes\Delta)\circ(\id_H\otimes\eta)\colon H\to H
\end{equation}
and the \emph{target counital map}
\begin{equation}
\epsilon_t:=(\epsilon\otimes\id_H)\circ(\mu\otimes\id_H)\circ(\id_H\otimes\sigma_{H,H})
\circ(\Delta\otimes\id_H)\circ(\eta\otimes\id_H)\colon H\to H.
\end{equation}
Their images $H_s:=\epsilon_s(H)$ and $H_t:=\epsilon_t(H)$ are
mutually commuting unital subalgebras and are called the \emph{source
base algebra} and the \emph{target base algebra}, respectively.

\begin{definition}
A \emph{Weak Hopf Algebra} $(H,\mu,\eta,\Delta,\epsilon,S)$ is a Weak
Bialgebra $(H,\mu,\eta,\Delta,\epsilon)$ with a linear map $S\colon
H\to H$ (\emph{antipode}) that satisfies the following conditions:
\begin{eqnarray}
\mu\circ(\id_H\otimes S)\circ\Delta &=& \epsilon_t,\\
\mu\circ(S\otimes\id_H)\circ\Delta &=& \epsilon_s,\\
\mu\circ(\mu\otimes\id_H)\circ(S\otimes\id_H\otimes S)
\circ(\Delta\otimes\id_H)\circ\Delta&=&S.
\end{eqnarray}
A \emph{homomorphism} $\phi\colon H\to H^\prime$ of WHAs is a homomorphism of
WBAs. The condition $\phi\circ S=S^\prime\circ\phi$ holds automatically.
\end{definition}

For convenience, we write $1=\eta(1)$ and omit parentheses in products,
exploiting associativity. We also use Sweedler's notation and write
$\Delta(x)=x^\prime\otimes x^\pprime$ for the comultiplication of $x\in H$ as
an abbreviation of the expression $\Delta(x)=\sum_k a_k\otimes b_k$ with some
$a_k,b_k\in H$. Similarly, we write
$((\Delta\otimes\id_H)\circ\Delta)(x)=x^\prime\otimes x^\pprime\otimes
x^\ppprime$, exploiting coassociativity.

A WBA [WHA] is a bialgebra [Hopf algebra] if and only if
$\epsilon_s=\eta\circ\epsilon$, if and only if
$\epsilon_t=\eta\circ\epsilon$, if and only if $H_s\cong k$, and if
and only if $H_t\cong k$.

\begin{definition}
\label{def_coquasi}
A \emph{coquasi-triangular} WBA $(H,\mu,\eta,\Delta,\epsilon,r)$ over $k$ is a
WBA $(H,\mu,\eta,\Delta,\epsilon)$ over $k$ with a linear form $r\colon
H\otimes H\to k$ (\emph{universal $r$-form}) that satisfies the following
conditions:
\begin{myenumerate}
\item
For all $x,y\in H$,
\begin{equation}
\label{eq_coquasidef}
r(x\otimes y)=\epsilon(x^\prime y^\prime)r(x^\pprime\otimes y^\pprime)
=r(x^\prime\otimes y^\prime)\epsilon(y^\pprime x^\pprime).
\end{equation}
\item
There exists a linear form $\bar r\colon H\otimes H\to k$ such that for all
$x,y\in H$,
\begin{eqnarray}
\label{eq_coquasiinv1}
\bar r(x^\prime\otimes y^\prime)r(x^\pprime\otimes y^\pprime)&=&\epsilon(yx),\\
\label{eq_coquasiinv2}
r(x^\prime\otimes y^\prime)\bar r(x^\pprime\otimes y^\pprime)&=&\epsilon(xy).
\end{eqnarray}
\item
For all $x,y,z\in H$,
\begin{eqnarray}
x^\prime y^\prime r(x^\pprime\otimes y^\pprime)
&=&r(x^\prime\otimes y^\prime)y^\pprime x^\pprime,\\
\label{eq_univr1}
r((xy)\otimes z)&=&r(y\otimes z^\prime) r(x\otimes z^\pprime),\\
\label{eq_univr2}
r(x\otimes (yz))&=&r(x^\prime\otimes y) r(x^\pprime\otimes z).
\end{eqnarray}
\end{myenumerate}
\end{definition}

We call the linear form $\bar r$ that appears in~\eqref{eq_coquasiinv1}
and~\eqref{eq_coquasiinv2} the \emph{weak convolution inverse} of $r$. Note
that it is uniquely determined by $r$ as soon as one
imposes~\eqref{eq_coquasidef}, \eqref{eq_coquasiinv1}
and~\eqref{eq_coquasiinv2}. Also note that every commutative WBA $H$ is
coquasi-triangular with $r(x\otimes y)=\epsilon(xy)$ for all $x,y\in H$. If
$H$ is a bialgebra, then Definition~\ref{def_coquasi} reduces to the familiar
notion that is sometimes also called a \emph{cobraided bialgebra}~\cite{Ka95}
or a \emph{dual quasi-triangular bialgebra}~\cite{Ma95a}.

\subsection{Localization}

In this section, we briefly review the construction of rings of fractions,
see, for example~\cite{St75,Be99}. The rings used in the following are associative
and unital, and the term \emph{ring homomorphism} always refers to
homomorphisms of associative unital rings.

\begin{definition}
\label{def_ringfractions}
Let $R$ be an associative unital ring and $S\subseteq R$ be a multiplicative
submonoid. A \emph{right ring of fractions} $R[S^{-1}]$ is a unital
associative ring with a homomorphism $\phi\colon R\to R[S^{-1}]$ such that the
following conditions hold:
\begin{myenumerate}
\item[(F1)]
For all $s\in S$, $\phi(s)$ is invertible in $R[S^{-1}]$.
\item[(F2)]
Each $a\in R[S^{-1}]$ is of the form $a=\phi(r)\phi(s)^{-1}$ for some $r\in
R$, $s\in S$.
\item[(F3)]
For each $r\in R$, we have $\phi(r)=0$ if and only if $rs=0$ for some $s\in
S$.
\end{myenumerate}
\end{definition}

The right ring of fractions $R[S^{-1}]$ is often called the
\emph{localization} of $R$ at the monoid $S$. The set $S$ is called the
\emph{set of denominators}.

\begin{proposition}[Proposition~II.1.1 of~\cite{St75}]
\label{prop_universal}
Let $R$ and $S$ be as in Definition~\ref{def_ringfractions}. If a right ring
of fractions $R[S^{-1}]$ exists, then it satisfies the following universal
property: For each ring homomorphism $\psi\colon R\to T$ such that $\psi(s)$
is invertible in $T$ for each $s\in S$, there exists a unique ring homomorphism
$\sigma\colon R[S^{-1}]\to T$ such that the following diagram commutes:
\begin{equation}
\label{eq_universal}
\begin{aligned}
\xymatrix{
R\ar[rr]^{\phi}\ar[ddrr]_{\psi}&&R[S^{-1}]\ar[dd]^{\sigma}\\
\\
&&T.
}
\end{aligned}
\end{equation}
\end{proposition}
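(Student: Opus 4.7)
The plan is to show that $\sigma$ is forced on us by the data, and then to verify that this forced formula is well-defined and multiplicative. For uniqueness, any ring homomorphism $\sigma\colon R[S^{-1}]\to T$ making the diagram commute must satisfy $\sigma(\phi(r))=\psi(r)$, and since $\sigma(\phi(s))=\psi(s)$ must be invertible, we are forced to set
\begin{equation}
\sigma\bigl(\phi(r)\phi(s)^{-1}\bigr)=\psi(r)\psi(s)^{-1}.
\end{equation}
By (F2) every element of $R[S^{-1}]$ has such a representation, so this determines $\sigma$ uniquely; commutativity of the diagram is then immediate from taking $s=1$.

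The bulk of the work, and the main obstacle, is well-definedness. Suppose $\phi(r_1)\phi(s_1)^{-1}=\phi(r_2)\phi(s_2)^{-1}$. The idea is to bring the two fractions to a common denominator inside $R$ itself, at the cost of correcting by elements of $S$ via (F3). First, I would apply (F2) to the element $\phi(s_2)^{-1}\phi(s_1)\in R[S^{-1}]$ to obtain $a\in R$ and $b\in S$ with $\phi(s_1 b)=\phi(s_2 a)$. Condition (F3) then yields $t\in S$ with $s_1(bt)=s_2(at)$ in $R$; writing $b'=bt\in S$ and $a'=at$, multiplying the original fraction equality on the right by $\phi(s_1 b')=\phi(s_2 a')$ gives $\phi(r_1 b')=\phi(r_2 a')$, and a second use of (F3) produces $u\in S$ with $r_1(b'u)=r_2(a'u)$ in $R$. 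Setting $b''=b'u\in S$ and $a''=a'u$, we simultaneously have $s_1 b''=s_2 a''$ and $r_1 b''=r_2 a''$ in $R$, with both $b''$ and $s_1 b''=s_2 a''$ lying in $S$. Applying $\psi$, the invertibility of $\psi(b'')$ and $\psi(s_2)$ forces $\psi(a'')$ to be invertible as well, and a direct manipulation
\begin{equation}
\psi(r_1)\psi(s_1)^{-1}=\psi(r_2)\psi(a'')\bigl(\psi(s_1)\psi(b'')\bigr)^{-1}=\psi(r_2)\psi(a'')\bigl(\psi(s_2)\psi(a'')\bigr)^{-1}=\psi(r_2)\psi(s_2)^{-1}
\end{equation}
gives the desired equality in $T$.

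It remains to check that $\sigma$ respects unit, addition and multiplication. The unit and addition are routine once a common denominator has been chosen by the same (F2)+(F3) procedure. For multiplicativity, given $\phi(r_1)\phi(s_1)^{-1}\cdot\phi(r_2)\phi(s_2)^{-1}$, one applies (F2) to $\phi(s_1)^{-1}\phi(r_2)=\phi(a)\phi(b)^{-1}$ to rewrite the product as $\phi(r_1 a)\phi(s_2 b)^{-1}$; (F3) then provides $t\in S$ with $r_2 bt=s_1 at$ in $R$, so $\psi(r_2 b)\psi(t)=\psi(s_1 a)\psi(t)$ and cancelling the invertible $\psi(t)$ yields $\psi(s_1)^{-1}\psi(r_2)=\psi(a)\psi(b)^{-1}$ in $T$. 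Comparing $\sigma$ of the product with the product of the $\sigma$'s then gives multiplicativity. The harder of these verifications is well-definedness itself; once the mechanism of using (F2) for common denominators and (F3) for annihilating the residue inside $R$ is in place, additivity and multiplicativity follow the same template.
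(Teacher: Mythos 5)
Your argument is correct, and it is essentially the standard proof of Proposition~II.1.1: uniqueness and the formula $\sigma\bigl(\phi(r)\phi(s)^{-1}\bigr)=\psi(r)\psi(s)^{-1}$ are forced by (F2), while well-definedness, additivity and multiplicativity are obtained by using (F2) to compare or combine fractions in $R[S^{-1}]$ and (F3) to pull the resulting relations back into $R$, after which invertibility of $\psi(S)$ finishes each verification. The paper itself gives no proof here — it cites Stenstr\"om and only records the resulting formula $\sigma\bigl(\frac{a}{s}\bigr)=\psi(a)\psi(s)^{-1}$ in Remark~\ref{rem_ore} — so your write-up supplies the argument the paper delegates to the reference, in the same spirit.
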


As a consequence, the right ring of fractions $R[S^{-1}]$ is unique up to
unique ring isomorphism as soon as it exists. Recall that an element $s\in S$
is called \emph{regular in} $R$ if $sr=0$ or $rs=0$ implies that $r=0$. The
set of all regular elements of $R$ forms a multiplicative monoid. As a
consequence of the condition~(F3), the ring homomorphism $\phi\colon R\to
R[S^{-1}]$ of Definition~\ref{def_ringfractions} is injective if and only if
each element $s\in S$ is regular in $R$.

\begin{theorem}[Proposition~II.1.4 of~\cite{St75}]
\label{thm_ringfractions}
Let $R$ and $S$ be as in Definition~\ref{def_ringfractions}. The right ring of
fractions $R[S^{-1}]$ exists if and only if the following conditions are
satisfied:
\begin{myenumerate}
\item[(S1)]
For all $s\in S$ and $a\in R$, there exist $t\in S$ and $b\in R$ such that
$sb=at$.
\item[(S2)]
For all $s\in S$ and $a\in R$ with $sa=0$, there exists a $t\in S$ such that
$at=0$.
\end{myenumerate}
In this case, $R[S^{-1}]=(R\times S)/\sim$. Here, the equivalence relation
$\sim$ is defined on $R\times S$ as follows: For $a,b\in R$ and $s,t\in S$, we
have $(a,s)\sim(b,t)$ if and only if there exist $c,d\in R$ such that $ac=bd$
and $sc=td$ with $td\in S$. We denote the equivalence class of $(a,s)\in
R\times S$ in $R[S^{-1}]$ by $\frac{a}{s}=[(a,s)]\in R[S^{-1}]$. The ring
operations are given as follows.
\begin{myenumerate}
\item
For all $a,b\in R$ and $s,t\in S$,
\begin{equation}
\frac{a}{s}+\frac{b}{t} = \frac{ac+bd}{u},
\end{equation}
where $c,d\in R$ such that $u=sc=td\in S$.
\item
The zero of $R[S^{-1}]$ is given by $\frac{0}{1}$.
\item
For all $a,b\in R$ and $s,t\in S$,
\begin{equation}
\frac{a}{s}\cdot\frac{b}{t}=\frac{ac}{tu},
\end{equation}
where $c\in R$ and $u\in S$ such that $sc=bu$.
\item
The multiplicative unit of $R[S^{-1}]$ is given by $\frac{1}{1}$.
\end{myenumerate}
The homomorphism $\phi$ finally reads
\begin{equation}
\phi\colon R\to R[S^{-1}],\quad a\mapsto \frac{a}{1}.
\end{equation}
\end{theorem}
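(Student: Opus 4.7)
The plan is to prove the biconditional in two directions and then establish that the explicit quotient construction realizes a right ring of fractions.

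For the forward implication, I assume that $R[S^{-1}]$ exists with a ring homomorphism $\phi\colon R\to R[S^{-1}]$ satisfying (F1)--(F3), and derive (S1) and (S2). For (S1), given $s\in S$ and $a\in R$, I consider the element $\phi(s)^{-1}\phi(a)\in R[S^{-1}]$. By (F2) it equals $\phi(b)\phi(t)^{-1}$ for some $b\in R$, $t\in S$, so $\phi(at-sb)=0$; condition (F3) then supplies $u\in S$ with $(at-sb)u=0$, and setting $t'=tu\in S$ and $b'=bu$ yields $sb'=at'$. For (S2), if $sa=0$ then $\phi(s)\phi(a)=0$, and invertibility of $\phi(s)$ from (F1) forces $\phi(a)=0$, so (F3) produces $t\in S$ with $at=0$.

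For the converse, I would carry out the quotient construction $R[S^{-1}]=(R\times S)/\sim$ explicitly. The first step is to verify that $\sim$ is an equivalence relation. Reflexivity and symmetry are immediate from the definition, while transitivity is the most delicate point: given witnesses for $(a,s)\sim(b,t)$ and $(b,t)\sim(c,u)$, I would apply (S1) to produce a common right-multiple of the two middle-denominator witnesses and then use (S2) to absorb any residual zero-divisors, yielding the data required for $(a,s)\sim(c,u)$. Next, I would verify that the stated addition and multiplication are well-defined. Addition requires (S1) applied to $s,t$ to supply $c,d$ with $sc=td\in S$; multiplication requires (S1) applied to $s,b$ to supply $c\in R$ and $u\in S$ with $sc=bu$. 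Independence from all choices of representatives and auxiliary data reduces in each case to bookkeeping with (S1) and (S2), and the ring axioms (associativity, distributivity, unit laws) follow by the same techniques.

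Finally, I would define $\phi\colon R\to R[S^{-1}]$, $a\mapsto\frac{a}{1}$, and check (F1)--(F3). The inverse of $\phi(s)$ is $\frac{1}{s}$, giving (F1); the identity $\frac{a}{s}=\phi(a)\phi(s)^{-1}$ gives (F2); and (F3) unpacks directly from applying the definition of $\sim$ to $\frac{a}{1}=\frac{0}{1}$. The principal obstacle throughout is the combinatorial bookkeeping of right-Ore manipulations: nearly every verification of well-definedness or of an equality of fractions amounts to producing a common right-multiple in $S$ via (S1) and then clearing residual annihilators via (S2). These computations are routine but must be organized carefully so as not to tacitly presume regularity of the denominators; without (S2), one would only recover the classical Ore theorem for regular denominators, and $\phi$ would then be forced to be injective.
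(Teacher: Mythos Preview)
The paper does not supply its own proof of this theorem: it is quoted verbatim as Proposition~II.1.4 of Stenstr{\"o}m~\cite{St75} and placed in the preliminaries section without argument. There is therefore nothing in the paper to compare your proposal against.

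That said, your outline is the standard one and matches what one finds in Stenstr{\"o}m and related references: derive~(S1) and~(S2) from~(F1)--(F3) for the forward direction, and for the converse build $(R\times S)/\sim$, check that $\sim$ is an equivalence relation (transitivity via~(S1) and~(S2)), verify well-definedness of the ring operations, and then confirm~(F1)--(F3) for $\phi(a)=\frac{a}{1}$. Your remark that~(S2) is precisely what replaces the regularity assumption of the classical Ore theorem, and that without it one cannot clear residual annihilators when checking transitivity and well-definedness, is exactly the point. The sketch is correct; the only caveat is that the bookkeeping you label ``routine'' is genuinely lengthy, so if you were to write it out in full you would want to isolate a lemma to the effect that any two fractions can be brought to a common denominator in $S$, which streamlines almost every subsequent verification.
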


\begin{remark}
\label{rem_ore}
\begin{myenumerate}
\item
Note that the right ring of fractions $R[S^{-1}]$ as constructed in
Theorem~\ref{thm_ringfractions} satisfies the universal
property of~\eqref{eq_universal} with the unique homomorphism
$\sigma(\frac{a}{s})=\psi(a){\psi(s)}^{-1}$ for $\frac{a}{s}\in R[S^{-1}]$.
\item
Let $R$ be an associative unital ring and $S\subseteq R$ be the
multiplicative submonoid of all regular elements of $R$. In this case, the
condition
\begin{myenumerate}
\item[(O1)]
For all $a,s\in R$ such that $s$ is regular, there exist $b,t\in R$ with $t$
regular such that $sb=at$.
\end{myenumerate}
implies~(S1) of Theorem~\ref{thm_ringfractions} while~(S2) holds trivially
with $t=1$. The condition~(O1) is known as the \emph{right Ore condition} in
reference to~\cite{Or31}, see, for example~\cite[Theorem A.5.5]{Be99} who
works with left rings of fractions though.
\item
Let $R$ be an associative unital ring and $S\subseteq Z(R)$ be any
multiplicative monoid contained in its centre. In this case, both
conditions~(S1) and~(S2) of Theorem~\ref{thm_ringfractions} hold trivially.
\item
The classical case of the field of fractions of an integral domain $R$ can
be seen as a special case of either Part~(2.) or Part~(3.) above with
$S=R\backslash\{0\}$.
\end{myenumerate}
\end{remark}

Note that if the right Ore condition holds in $R$, then for $a,s\in R$ both
regular, there exist $b,t\in R$ both regular such that $sb=at$. This allows us
to simplify the equivalence relation in Theorem~\ref{thm_ringfractions} such
as to require that $c,d\in S$ (rather than merely $c,d\in R$),
see, for example~\cite[Lemma A.5.3]{Be99}, as follows:

\begin{remark}
\label{rem_regular}
Let $R$ be an associative unital ring that satisfies the right Ore
condition~(O1) and $S\subseteq R$ be the multiplicative submonoid of all
regular elements in $R$. Then the conditions~(S1) and~(S2) of
Theorem~\ref{thm_ringfractions} are satisfied and $R[S^{-1}]=(R\times S)/\sim$
where $(a,s)\sim(b,t)$ if and only if there exist $c,d\in S$ such that $ac=bd$
and $sc=td$.
\end{remark}

\begin{example}
\label{ex_localizecentral}
Let $R$ be an associative unital ring and $S=\{1,t,t^2,\ldots\}$ be the
commutative monoid generated by a central element $t\in Z(R)$ of infinite
order\footnote{By this, we mean that $t^\ell\neq t^m$ for all
$\ell\in\{1,2,3,\ldots\}$ and for all $0\leq m<\ell$.}. In this case, the
ring of fractions $R[S^{-1}]$ can be expressed in terms of polynomials with
coefficients in $R$ as follows,
\begin{equation}
R[X]/(tX-1)\cong R[S^{-1}].
\end{equation}
This can be seen by verifying the universal property
(Proposition~\ref{prop_universal}) which also yields a characterization of the
isomorphism $\Phi\colon R[X]/(tX-1)\to R[S^{-1}]$ by the conditions that
$\Phi(r)=\frac{r}{1}$ for all $r\in R$ and $\Phi(X)=\frac{1}{t}$.
\end{example}

\section{Localization in Weak Bialgebras}
\label{sect_new}

\subsection{Group-like elements}

\begin{definition}
An element $g\in H$ of a WBA $H$ is called
\begin{myenumerate}
\item
\emph{right group-like} if $\Delta g = g1^\prime\otimes g1^\pprime$ and $\epsilon_s(g)=1$,
\item
\emph{left group-like} if $\Delta g = 1^\prime g\otimes1^\pprime g$ and $\epsilon_t(g)= 1$,
\item
\emph{group-like} if it is both right and left group-like.
\end{myenumerate}
\end{definition}

The set of group-like elements in a WBA forms a monoid. Note that we do not
require the group-like elements of a WBA to have a multiplicative inverse. If
a group-like element has one, then the inverse is group-like, too. If $H$ is a
WHA with antipode $S$, then every group-like $g\in H$ has the inverse
$g^{-1}=S(g)$.

In a bialgebra, the notions of both right and left group-like elements reduces
to the conditions that $\Delta(g)=g\otimes g$ and $\epsilon(g)=1$.

\subsection{Coquasi-triangular WBAs}

The following technical results are elementary, but it takes quite some
experience with WBAs in order to establish them from the definitions. They
will be used in various proofs below. Note that an antipode is not required in
the following.

\begin{proposition}
Let $H$ be a coquasi-triangular WBA over some field $k$ with the universal $r$-form $r\colon
H\otimes H\to k$ whose weak convolution inverse we denote by
$\overline{r}\colon H\otimes H\to k$. Then for all $x,y\in H$,
\begin{gather}
\label{eq_propr1}
r(x\otimes 1) = \epsilon(x) = r(1\otimes x),\\
r(x^\prime\otimes y^\prime)r(x^\pprime\otimes\epsilon_s(y^\pprime))=r(x\otimes y),\\
r(x^\prime\otimes\epsilon_t(y^\prime))r(x^\pprime\otimes y^\pprime)=r(x\otimes y),\\
r(\epsilon_s(x^\pprime)\otimes y^\prime)r(x^\prime\otimes y^\pprime)=r(x\otimes y),\\
r(x^\pprime\otimes y^\prime)r(\epsilon_t(x^\prime)\otimes y^\pprime)=r(x\otimes y),\\
r(x^\prime\otimes y^\prime)\epsilon_t(y^\pprime)\otimes\epsilon_s(x^\pprime)
= \epsilon_t(x^\prime)\epsilon_s(y^\prime)r(x^\pprime\otimes y^\pprime),\\
r(x\otimes\epsilon_t(y))=\epsilon(xy),\\
r(x\otimes\epsilon_s(y))=\epsilon(yx),\\
r(\epsilon_t(x)\otimes y)=\epsilon(\epsilon_t(x)y),\\
r(\epsilon_s(x)\otimes y)=\epsilon(y\epsilon_s(x)),\\
r(x\otimes y^\prime)\epsilon_s(y^\pprime)
= r(x^\prime\otimes y)\epsilon_s(\epsilon_t(x^\pprime)),\\
\epsilon_t(y^\prime)r(x\otimes y^\pprime)
= \epsilon_t(\epsilon_s(x^\prime))r(x^\pprime\otimes y).
\end{gather}
\end{proposition}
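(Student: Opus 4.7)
The plan is to establish the twelve identities sequentially, exploiting three layers of structure: the coquasi-triangular axioms \eqref{eq_coquasidef}--\eqref{eq_univr2}, the weak convolution-inverse property \eqref{eq_coquasiinv1}--\eqref{eq_coquasiinv2}, and standard WBA identities such as $x = \epsilon_t(x^\prime) x^\pprime = x^\prime \epsilon_s(x^\pprime)$ together with $\Delta(1) \in H_t \otimes H_s$ (so that $1^\prime \in H_t$ and $1^\pprime \in H_s$, with the explicit formulas $\epsilon_t(y) = \epsilon(1^\prime y) 1^\pprime$ and $\epsilon_s(y) = 1^\prime \epsilon(y 1^\pprime)$).

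For the normalization identities \eqref{eq_propr1}, I would begin by specializing \eqref{eq_univr1} with $x = y = 1$ to see that $r(1\otimes\cdot)$ is convolution-idempotent, and then specialize \eqref{eq_coquasidef} at one variable equal to $1$, using the counit axioms, to pin it down as $\epsilon$. An analogous argument using \eqref{eq_univr2} handles $r(\cdot\otimes 1) = \epsilon$. These normalizations will be used constantly in what follows.

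Identities (2)--(5) are absorption statements. The plan is to substitute the standard factorizations $x = \epsilon_t(x^\prime) x^\pprime$ or $x = x^\prime \epsilon_s(x^\pprime)$ into the first tensor slot of $r$, expand via \eqref{eq_univr1}, and collapse the spurious $r(\cdot\otimes 1)$ factors using the normalization; the $y$-side variants follow in the same way from \eqref{eq_univr2}. Identities (7)--(10) then drop out by combining (2)--(5) with the normalization and the explicit formulas for $\epsilon_s$ and $\epsilon_t$ above, noting that each target/source element is by construction a value of these idempotents so the counit absorbs the $1^\prime$ or $1^\pprime$ factor.

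The main obstacle is the tensor-valued identity (6), since it cannot be reduced to a single application of the multiplicative axioms. My plan is to start from the braiding axiom $x^\prime y^\prime r(x^\pprime\otimes y^\pprime) = r(x^\prime\otimes y^\prime) y^\pprime x^\pprime$, apply $\epsilon_t \otimes \epsilon_s$ to both sides, and exploit that $\epsilon_t$ and $\epsilon_s$ are idempotents which are multiplicative on the relevant base subalgebras; the delicate step is to shuffle the Sweedler factors so that $1^\prime \in H_t$ and $1^\pprime \in H_s$ end up on the correct sides, preventing premature collapse of tensor factors. Once (6) is in hand, identities (11) and (12) follow as scalar corollaries by applying $\id \otimes \epsilon$ respectively $\epsilon \otimes \id$ to (6) and simplifying via the counit axiom $\epsilon(x^\pprime) x^\prime = x$ and the already-proved identities (7)--(10).
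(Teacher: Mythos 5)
The paper states this proposition without proof (it is described as elementary but requiring ``experience with WBAs''), so there is no printed argument to compare with; judging your plan on its merits, part of it is sound: your identities (2)--(5) really do follow in one line each, by inserting $y=y^\prime\epsilon_s(y^\pprime)$, $y=\epsilon_t(y^\prime)y^\pprime$, $x=x^\prime\epsilon_s(x^\pprime)$, $x=\epsilon_t(x^\prime)x^\pprime$ into \eqref{eq_univr1} or \eqref{eq_univr2} (no spurious $r(\cdot\otimes 1)$ factors arise, so no normalization is needed there). The genuine gap is in your treatment of the normalization \eqref{eq_propr1} and of (7)--(10). For \eqref{eq_propr1} you invoke only \eqref{eq_univr1}, \eqref{eq_coquasidef} and the counit axioms; all of these are satisfied by the zero form $r\equiv 0$, for which \eqref{eq_propr1} fails, so no argument built from those ingredients alone can prove it --- the weak convolution inverse \eqref{eq_coquasiinv1}--\eqref{eq_coquasiinv2} must enter. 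And once it does, the WBA subtlety bites: $\bar r$ inverts $r$ only along the coproduct, with $\Delta(1)\neq 1\otimes 1$, so the bialgebra-style ``idempotent with an inverse equals $\epsilon$'' argument does not go through verbatim; specializing \eqref{eq_coquasidef} at an argument equal to $1$ only yields relations such as $r(1\otimes y)=r(\epsilon_t(y^\prime)\otimes y^\pprime)$, not $\epsilon(y)$. Since you use \eqref{eq_propr1} ``constantly in what follows'', this gap propagates. The same linkage problem defeats the claim that (7)--(10) ``drop out'': from $\epsilon_t(y)=\epsilon(1^\prime y)1^\pprime$ you get $r(x\otimes\epsilon_t(y))=\epsilon(1^\prime y)\,r(x\otimes 1^\pprime)$ with $1^\pprime$ still coupled to the $1^\prime$ inside $\epsilon$ through $\Delta(1)$, so the normalization cannot simply be applied inside $r$; a further round of \eqref{eq_coquasidef} together with the weak unit/counit axioms (or $\bar r$) is required.

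Two further concrete problems. First, $\Delta(1)\in H_s\otimes H_t$, not $H_t\otimes H_s$: your own formulas $\epsilon_t(y)=\epsilon(1^\prime y)1^\pprime$ and $\epsilon_s(y)=1^\prime\epsilon(y1^\pprime)$ force $1^\prime\in H_s$ and $1^\pprime\in H_t$. Since your plan for (6) explicitly hinges on shuffling Sweedler factors ``so that $1^\prime\in H_t$ and $1^\pprime\in H_s$ end up on the correct sides'', that bookkeeping has to be redone; note also that the braiding axiom is an identity in $H$, so ``apply $\epsilon_t\otimes\epsilon_s$ to both sides'' only makes sense after first comultiplying, which is where the real work sits. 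Second, (11) and (12) are not corollaries of (6) obtained by applying $\id\otimes\epsilon$ or $\epsilon\otimes\id$: doing so (reading the right-hand side of (6) as a tensor) produces $r(x\otimes y^\prime)\,\epsilon_t(y^\pprime)=\epsilon_t(x^\prime)\,r(x^\pprime\otimes y)$ and $r(x^\prime\otimes y)\,\epsilon_s(x^\pprime)=\epsilon_s(y^\prime)\,r(x\otimes y^\pprime)$, which place the Sweedler legs of $y$ oppositely to (11)--(12) and, crucially, contain $\epsilon_t(x^\prime)$, $\epsilon_s(x^\pprime)$ rather than the composites $\epsilon_t(\epsilon_s(x^\prime))$, $\epsilon_s(\epsilon_t(x^\pprime))$ that actually occur there; in a WBA these differ, so (11)--(12) need their own derivation from \eqref{eq_coquasidef} and the counital identities rather than being scalar shadows of (6).
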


In order to obtain the corresponding identities for $\overline{r}$, we use the
fact that if $(H,r)$ is coquasi-triangular, then $(H^{\op,\cop},\overline{r})$
is coquasi-triangular as well.

\begin{proposition}
Let $H$ be a coquasi-triangular WBA over some field $k$ with the universal $r$-form $r\colon
H\otimes H\to k$ whose weak convolution inverse we denote by
$\overline{r}\colon H\otimes H\to k$. Let $g\in H$ be group-like and $x,y\in
H$. Then
\begin{gather}
\overline{r}(x^\prime\otimes g)r(x^\pprime\otimes g)=\epsilon(x),\\
r(x^\prime\otimes g)\overline{r}(x^\pprime\otimes g)=\epsilon(x),\\
r(x\otimes g^\prime)r(y\otimes g^\pprime)
=r(x^\prime\otimes g)r(y^\prime\otimes g)\epsilon(y^\pprime x^\pprime),\\
\overline{r}(x^\prime\otimes g^\prime)\overline{r}(y^\prime\otimes g^\pprime)
=\epsilon(x^\prime y^\prime)\overline{r}(x^\pprime\otimes g)\overline{r}(y^\pprime\otimes g).
\end{gather}
\end{proposition}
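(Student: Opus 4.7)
The plan is to establish each of the four identities by specialising the axioms of a coquasi-triangular WBA and the technical identities of the previous proposition to the case $y=g$ with $g$ group-like. The essential leverage comes from the group-like structure $\Delta g = g 1^\prime \otimes g 1^\pprime = 1^\prime g \otimes 1^\pprime g$ together with $\epsilon_s(g)=\epsilon_t(g)=1$; the main technical task is to absorb the spurious coproducts of $1$ that arise when these substitutions are made.

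For identities (1) and (2), I would start from the defining weak convolution inverse relations $\bar r(x^\prime\otimes y^\prime)r(x^\pprime\otimes y^\pprime)=\epsilon(yx)$ and $r(x^\prime\otimes y^\prime)\bar r(x^\pprime\otimes y^\pprime)=\epsilon(xy)$, and set $y=g$. After invoking $\Delta g = g 1^\prime \otimes g 1^\pprime$, the left-hand side reads $\bar r(x^\prime \otimes g 1^\prime)r(x^\pprime\otimes g 1^\pprime)$. To eliminate the auxiliary factors $1^\prime,1^\pprime$ and recover $\bar r(x^\prime\otimes g)r(x^\pprime\otimes g)$, I would apply the identities of the previous proposition that relate $r(x^\prime \otimes y^\prime)r(x^\pprime\otimes \epsilon_s(y^\pprime))$ back to $r(x\otimes y)$, together with the group-like condition $\epsilon_s(g)=1$; the analogue for $\bar r$ is obtained from the fact that $(H^{\op,\cop},\bar r)$ is itself coquasi-triangular. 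Simultaneously, one must check that $\epsilon(gx)=\epsilon(x)=\epsilon(xg)$ for group-like $g$; this reduces to a straightforward computation using $\Delta g=g1^\prime\otimes g1^\pprime$ and the weak counital axiom $\epsilon\circ\mu\circ(\mu\otimes\id_H)=(\epsilon\otimes\epsilon)\circ(\mu\otimes\mu)\circ(\id_H\otimes\Delta\otimes\id_H)$.

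For identity (3), the plan is to exploit the quasi-commutativity relation $x^\prime y^\prime\, r(x^\pprime\otimes y^\pprime)=r(x^\prime\otimes y^\prime)\, y^\pprime x^\pprime$. Applying $\epsilon$ to both sides and then using the multiplicativity axiom $r((xy)\otimes g)=r(y\otimes g^\prime)r(x\otimes g^\pprime)$ together with identities (1)--(2) (to insert a compensating $\bar r(\cdot\otimes g)r(\cdot\otimes g)=\epsilon$), I expect the left-hand side $r(x\otimes g^\prime)r(y\otimes g^\pprime)$ to collapse to $r(x^\prime\otimes g)r(y^\prime\otimes g)\epsilon(y^\pprime x^\pprime)$ after bookkeeping of the coproducts. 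Identity (4) is then automatic by applying identity (3) to the coquasi-triangular WBA $(H^{\op,\cop},\bar r)$, observing that group-likes of $H$ remain group-like in $H^{\op,\cop}$ and that opposite multiplication together with opposite comultiplication exchanges the order of the arguments of $\epsilon$ in the required way.

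The main obstacle is not conceptual but combinatorial: because $\Delta g$ differs from $g\otimes g$ by the weak coproduct of the unit, every invocation of a coquasi-triangular axiom introduces insertions of $\epsilon_s$ or $\epsilon_t$ that must be systematically removed using the dozen auxiliary identities of the preceding proposition. The proof should therefore be organised so that each of these insertions is immediately cancelled by $\epsilon_s(g)=1$ or $\epsilon_t(g)=1$, rather than propagated through the calculation.
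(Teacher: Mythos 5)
The paper never writes out a proof of this proposition: it is one of the results the authors declare elementary but experience-demanding, and the only guidance given is the remark, which you correctly pick up, that $(H^{\op,\cop},\overline{r})$ is again coquasi-triangular. Your overall strategy --- set $y=g$ in \eqref{eq_coquasiinv1}, \eqref{eq_coquasiinv2}, use $\epsilon_s(g)=1=\epsilon_t(g)$ to get $\epsilon(gx)=\epsilon(x)=\epsilon(xg)$, and transport $r$-identities to $\overline{r}$-identities through $H^{\op,\cop}$ --- is the natural one and consistent with that hint. The problem is that the steps you defer as bookkeeping are the entire content of the statement, and the lemma you name for the crucial step does not do the job. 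For (1) and (2), after substituting $\Delta g$ you must prove $\overline{r}(x^\prime\otimes g^\prime)\,r(x^\pprime\otimes g^\pprime)=\overline{r}(x^\prime\otimes g)\,r(x^\pprime\otimes g)$; the identity you cite, $r(x^\prime\otimes y^\prime)r(x^\pprime\otimes\epsilon_s(y^\pprime))=r(x\otimes y)$, and its $\overline{r}$-analogue each concern a product of two like factors, whereas here the product is mixed $\overline{r}\cdot r$ and, once $\Delta g=1^\prime g\otimes 1^\pprime g$ is inserted, the two factors are tied together through the legs of $\Delta(1)$. Expanding with \eqref{eq_univr2} and its $\op$-$\cop$ counterpart gives something like $\overline{r}(x^{(1)}\otimes 1^\prime)\,\overline{r}(x^{(2)}\otimes g)\,r(x^{(3)}\otimes 1^\pprime)\,r(x^{(4)}\otimes g)$, in which the $\Delta(1)$-legs are not adjacent and cannot simply be cancelled by $\epsilon_s(g)=1$; removing them needs the $\Delta(1)$/counital-map identities (the last displayed ones of the preceding proposition) and is precisely where the proof lives. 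This is missing from your plan. (Also, $\epsilon(gx)=\epsilon(x)$ comes from $\epsilon(gx)=\epsilon(\epsilon_s(g)x)$ and $\epsilon_s(g)=1$, i.e.\ from the counital half of group-likeness, not from $\Delta g$.)

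For (3), the step ``apply $\epsilon$ to the quasi-commutativity relation'' is vacuous: it yields $\epsilon(x^\prime y^\prime)r(x^\pprime\otimes y^\pprime)=r(x^\prime\otimes y^\prime)\epsilon(y^\pprime x^\pprime)$, which is literally \eqref{eq_coquasidef} (both sides equal $r(x\otimes y)$), and it is not explained how inserting $\overline{r}(\cdot\otimes g)r(\cdot\otimes g)=\epsilon(\cdot)$ from (1)--(2) would ever generate the correction factor $\epsilon(y^\pprime x^\pprime)$. A workable route is rather to rewrite the left-hand side as $r(yx\otimes g)$ by \eqref{eq_univr1}, then apply \eqref{eq_coquasidef} with $\Delta g=g1^\prime\otimes g1^\pprime$, strip the factor $g$ inside $\epsilon$ using $\epsilon(gz)=\epsilon(z)$, and again absorb the $\Delta(1)$-legs; your sketch does not produce this chain. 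Deducing (4) from (3) via $(H^{\op,\cop},\overline{r})$ is right in spirit (two-sided group-likes of $H$ stay group-like in $H^{\op,\cop}$ because the left and right conditions swap), but note that passing to $\Delta^{\cop}$ also interchanges the two legs of $\Delta g$ and reverses the product inside $\epsilon$, so you must carry out the relabelling explicitly to check that you land on the stated form rather than on the variant with $g^\prime$ and $g^\pprime$ exchanged.
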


For a WBA $H$, we denote by $\Aut(H)$ its group of automorphisms.

\begin{definition}
\label{def_almostcentral}
Let $H$ be a WBA. A multiplicative monoid $G\subseteq H$ is called
\emph{almost central} if there exists a homomorphism of monoids $\mathcal{I}\colon
G\to\Aut(H)$, $g\mapsto\mathcal{I}_g$, such that the following conditions hold:
\begin{myenumerate}
\item(C1)
$gx=\mathcal{I}_g(x)g$ for all $g\in G$ and $x\in H$,
\item(C2)
$\mathcal{I}_g(G)\subseteq G$ for all $g\in G$.
\end{myenumerate}
\end{definition}

Our terminology \emph{almost central} is motivated by the observation that if
$G\subseteq Z(H)$, then it is almost central with $\mathcal{I}_g=\id_H$ for
all $g\in G$.

\begin{proposition}
\label{prop_almostcentral}
Let $H$ be a coquasi-triangular WBA over some field $k$ with the universal $r$-form $r\colon
H\otimes H\to k$ whose weak convolution inverse we denote by
$\overline{r}\colon H\otimes H\to k$. We define for each $g,x\in G$,
\begin{equation}
\mathcal{I}_g(x)=\overline{r}(x^\prime\otimes g)x^\pprime
r(x^\ppprime\otimes g).
\end{equation}
Let $G\subseteq H$ be a monoid of group-like elements such that
$\mathcal{I}(G)\subseteq G$. Then $G$ is almost central in $H$.
\end{proposition}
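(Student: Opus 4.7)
The plan is to verify in turn the three requirements of Definition~\ref{def_almostcentral}: that $\mathcal{I}_g$ is an automorphism of $H$ as a Weak Bialgebra for each $g\in G$, that $\mathcal{I}$ is a monoid homomorphism $G\to\Aut(H)$, and that the conjugation identity~(C1) $gx=\mathcal{I}_g(x)g$ holds. Condition~(C2) is assumed as hypothesis.

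\emph{Bijectivity of $\mathcal{I}_g$.} I would first exhibit an explicit two-sided inverse
\[
\mathcal{J}_g(x) := r(x^\prime\otimes g)\,x^\pprime\,\overline{r}(x^\ppprime\otimes g).
\]
The verification $\mathcal{I}_g\circ\mathcal{J}_g=\id_H=\mathcal{J}_g\circ\mathcal{I}_g$ is a short Sweedler computation using the weak convolution identities $\overline{r}(x^\prime\otimes g)r(x^\pprime\otimes g)=\epsilon(x)=r(x^\prime\otimes g)\overline{r}(x^\pprime\otimes g)$ for group-like $g$, stated in the second proposition.

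\emph{$\mathcal{I}_g$ is a WBA-homomorphism.} Counit preservation $\epsilon\circ\mathcal{I}_g=\epsilon$ is immediate once the middle $x^\pprime$ factor is contracted by $\epsilon$. Coproduct preservation follows from coassociativity, since $r,\overline{r}$ are scalars and pass through tensor factors. For algebra multiplicativity $\mathcal{I}_g(xy)=\mathcal{I}_g(x)\mathcal{I}_g(y)$ one expands via $\Delta(xy)=\Delta x\cdot\Delta y$, applies the splitting identities $r(xy\otimes g)=r(y\otimes g^\prime)r(x\otimes g^\pprime)$ from~\eqref{eq_univr1} and $\overline{r}(xy\otimes g)=\overline{r}(x\otimes g^\pprime)\overline{r}(y\otimes g^\prime)$ (its $(H^{\op,\cop},\overline{r})$-analogue), and recombines using the identity
\[
r(x\otimes g^\prime)r(y\otimes g^\pprime)=r(x^\prime\otimes g)r(y^\prime\otimes g)\epsilon(y^\pprime x^\pprime)
\]
together with its $\overline{r}$-counterpart from the second proposition; the residual $\epsilon$-factors are absorbed by the multiplicativity of $\Delta$. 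Unit preservation then follows from bijectivity combined with multiplicativity.

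\emph{$\mathcal{I}$ is a monoid homomorphism.} The equality $\mathcal{I}_1=\id_H$ is immediate from $r(x\otimes 1)=\epsilon(x)=\overline{r}(x\otimes 1)$ and the counit axiom. For $\mathcal{I}_{gh}=\mathcal{I}_g\circ\mathcal{I}_h$, direct substitution together with $r(x\otimes gh)=r(x^\prime\otimes g)r(x^\pprime\otimes h)$ from~\eqref{eq_univr2} and $\overline{r}(x\otimes gh)=\overline{r}(x^\pprime\otimes g)\overline{r}(x^\prime\otimes h)$ (its $(H^{\op,\cop},\overline{r})$-analogue) matches both sides after relabelling Sweedler indices.

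\emph{Main obstacle: property~(C1).} Multiplying the coquasi-triangular axiom $x^\prime y^\prime r(x^\pprime\otimes y^\pprime)=r(x^\prime\otimes y^\prime)y^\pprime x^\pprime$ on the left by $\overline{r}$ applied to the appropriate $\Delta^{(2)}$-Sweedler components, and using $\overline{r}\ast r=\epsilon\circ\mu$ together with the multiplicativity of $\Delta$, yields the general formula
\[
yx \;=\; \overline{r}(x^\prime\otimes y^\prime)\,x^\pprime y^\pprime\,r(x^\ppprime\otimes y^\ppprime),
\]
valid for all $x,y\in H$ in a coquasi-triangular WBA. Specializing to $y=g$ gives $gx=\overline{r}(x^\prime\otimes g^\prime)\,x^\pprime g^\pprime\,r(x^\ppprime\otimes g^\ppprime)$, which is to be compared with $\mathcal{I}_g(x)g=\overline{r}(x^\prime\otimes g)\,x^\pprime\,r(x^\ppprime\otimes g)\,g$. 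The remaining task is to show that, thanks to the group-like property of $g$, the iterated Sweedler factors of $\Delta^{(2)}g$ can be contracted back to three copies of $g$ inside this expression. This is the WBA-specific subtlety: since $\Delta g=g1^\prime\otimes g1^\pprime\neq g\otimes g$, one must invoke the technical identities of the first proposition involving $\epsilon_s,\epsilon_t$ applied to the Sweedler factors, together with $\epsilon_s(g)=\epsilon_t(g)=1$, in order to neutralize the contributions of $1^\prime,1^\pprime$ arising from $\Delta g$. In a genuine bialgebra this final step collapses to one line because $\Delta g=g\otimes g$; for WBAs it is precisely where the machinery of the two preceding propositions becomes essential, and it concentrates the entire subtlety of the notion ``almost central.''
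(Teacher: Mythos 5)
Your outline is essentially the route of the paper's own proof, which is itself only a two-line sketch: it exhibits exactly the inverse $\mathcal{I}_g^{-1}(x)=r(x^\prime\otimes g)\,x^\pprime\,\overline{r}(x^\ppprime\otimes g)$ that you write down, asserts without computation that each $\mathcal{I}_g$ is a homomorphism of WBAs, records $\mathcal{I}_1=\id_H$ and the composition law ($\mathcal{I}_h\circ\mathcal{I}_g=\mathcal{I}_{hg}$ in the paper, which is your $\mathcal{I}_g\circ\mathcal{I}_h=\mathcal{I}_{gh}$ after relabelling), and says nothing at all about how condition~(C1) of Definition~\ref{def_almostcentral} is verified. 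On~(C1) you in fact go further than the paper: the general exchange formula
\[
yx=\overline{r}(x^\prime\otimes y^\prime)\,x^\pprime y^\pprime\,r(x^\ppprime\otimes y^\ppprime),
\]
obtained from $x^\prime y^\prime r(x^\pprime\otimes y^\pprime)=r(x^\prime\otimes y^\prime)y^\pprime x^\pprime$, \eqref{eq_coquasiinv1} and the multiplicativity of $\Delta$, is correct and is the right starting point.

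Three places where your sketch claims more ease than is warranted. First, comultiplicativity of $\mathcal{I}_g$ does not follow from coassociativity alone: $(\mathcal{I}_g\otimes\mathcal{I}_g)\Delta(x)$ carries the cross factor $r(\,\cdot\,\otimes g)\,\overline{r}(\,\cdot\,\otimes g)$ on the two middle Sweedler legs, which must be contracted via $r(x^\prime\otimes g)\overline{r}(x^\pprime\otimes g)=\epsilon(x)$ --- the same identity you already invoke for bijectivity, so this is only a slip of phrasing. Second, the composition law is not a pure relabelling: after splitting with \eqref{eq_univr1}--\eqref{eq_univr2} and their $\overline{r}$-analogues, the two $\overline{r}$-factors pair $g$ and $h$ with the opposite pair of legs on the two sides, and reconciling them requires the group-like reshuffling identities of the preceding technical proposition (or (C1) itself). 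Third, and most importantly, the contraction of the iterated Sweedler legs of $g$ in~(C1) --- the one genuinely WBA-specific computation, using $\Delta g=g1^\prime\otimes g1^\pprime=1^\prime g\otimes 1^\pprime g$, $\Delta(1)\in H_t\otimes H_s$, the $\epsilon_s,\epsilon_t$ identities and $\epsilon_s(g)=1=\epsilon_t(g)$ --- is announced but not carried out, so your argument stops exactly where the content of ``almost central'' lies. To be fair, the published proof omits this step (indeed any mention of~(C1)) as well; your plan is not wrong, merely unfinished at that point, and it is completable with the lemmas you cite.
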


\begin{proof}
It can be shown that $\mathcal{I}_g$ for each $g\in G$ is a homomorphism of
WBAs whose inverse is given by
\begin{equation}
\mathcal{I}_g^{-1}(x)=r(x^\prime\otimes g)x^\pprime\overline{r}(x^\ppprime\otimes g)
\end{equation}
for all $x\in H$. We also find that $\mathcal{I}_1=\id_H$ and
$\mathcal{I}_h\circ\mathcal{I}_g=\mathcal{I}_{hg}$ for all $h,g\in G$.
\end{proof}

\begin{remark}
If $H$ in Proposition~\ref{prop_almostcentral} is a coquasi-triangular
bialgebra and $g,h\in H$ are group-like, then
$\mathcal{I}_g(h)=h$~\cite{Ha92a}. In this case, condition~(C2) of
Definition~\ref{def_almostcentral} is always satisfied, and (C1) implies for
all $g,h\in G$ that
$gh=\mathcal{I}_g(h)g=\mathcal{I}_g(h)\mathcal{I}_g(g)=\mathcal{I}_g(hg)=hg$,
and so any monoid of group-likes that is almost central with respect to this
$\mathcal{I}$, is in fact commutative.

In a coquasi-triangular WBA this is not necessarily true as can be seen using
the results of~\cite{Ni02} as follows: Let $H$ be a WHA. The source base
algebra $H_s$ forms the monoidal unit of the category of finite-dimensional
right $H$-comodules using its right-regular comodule structure
$\beta_{H_s}\colon H_s\to H_s\otimes H$, $x\mapsto x^\prime\otimes x^\pprime$
(note that $x\in H_s$ implies that $\beta_{H_s}(x)\in H_s\otimes H$).

Each group-like element $\gamma\in H$ gives rise to a right $H$-comodule
structure on the vector space $H_s$ which we call $H_s^{(\gamma)}$,
\begin{equation}
\beta_{H_s^{(\gamma)}}\colon H_s\to H_s\otimes H,\quad
x\mapsto x^\prime\otimes(gx^\pprime).
\end{equation}
If $\gamma_1,\gamma_2\in H$ are group-like, then $H_s^{(\gamma_1)}\otimes
H_s^{(\gamma_2)}\cong H_s^{(\gamma_1\gamma_2)}$ are isomorphic as right
$H$-comodules. Furthermore, $H_s^{(\gamma_1)}\cong H_s^{(\gamma_2)}$ if and
only if $\gamma_1=\gamma_2\delta$ where $\delta$ is a so-called \emph{trivial
group-like element}, \ie\ $\delta$ is group-like and is contained in
$H_\mathrm{min}\subseteq H$, the \emph{minimal WBA} of $H$, \ie\ the smallest
sub WBA of $H$ that contains the unit of $H$.

If $H$ is a coquasi-triangular WBA, then its category of finite-dimensional
right comodules is braided monoidal. Therefore, for any group-like elements
$g,h\in H$, the comodules $H_s^{(gh)}\cong H_s^{(g)}\otimes H_s^{(h)}\cong
H_s^{(h)}\otimes H_t^{(g)}\cong H_s^{(hg)}$ are isomorphic, and so
$gh=hg\delta$ with some trivial group-like element $\delta$. Therefore, if
$H_\mathrm{min}$ contains group-like elements other than $1\in H$, and
$G\subseteq H$ is the monoid of all group-like elements, in general
$\mathcal{I}_g(h)\neq h$.
\end{remark}

\subsection{Localization}

\begin{theorem}
\label{thm_localizewba1}
Let $H$ be a WBA with an almost central multiplicative monoid $G\subseteq H$
via $\mathcal{I}\colon G\to\Aut(H)$. Then the conditions~(S1) and~(S2) of
Theorem~\ref{thm_ringfractions} are satisfied, and the right ring of fractions
$H[G^{-1}]$ exists with
\begin{gather}
\frac{x}{g}+\frac{y}{h} = \frac{x\mathcal{I}_g^{-1}(h)+yg}{hg}
=\frac{x\mathcal{I}_h^{-1}(g)+yh}{gh},\\
\frac{x}{g}\cdot\frac{y}{h} = \frac{x\mathcal{I}_g^{-1}(y)}{hg}.
\end{gather}
It forms a $k$-algebra with $\lambda\frac{x}{g}=\frac{\lambda x}{g}$ for all
$\lambda\in k$, $x\in H$ and $g\in G$.
\end{theorem}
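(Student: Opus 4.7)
The plan is to verify the Ore-type conditions (S1) and (S2) of Theorem~\ref{thm_ringfractions} using the almost central hypothesis, apply that theorem to obtain $H[G^{-1}]$, and then unpack its operations into the explicit expressions stated.

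For (S1), given $g\in G$ and $x\in H$, I take $h:=g\in G$ and $y:=\mathcal{I}_g^{-1}(x)\in H$; condition (C1) immediately yields $gy=\mathcal{I}_g(\mathcal{I}_g^{-1}(x))\,g=xg=xh$. For (S2), suppose $gx=0$. Then (C1) gives $\mathcal{I}_g(x)\,g=0$, and applying the algebra automorphism $\mathcal{I}_g^{-1}$ yields $x\cdot\mathcal{I}_g^{-1}(g)=0$. The natural candidate for the required right annihilator in $G$ is $h:=\mathcal{I}_g^{-1}(g)$, and verifying $h\in G$ is the crux of (S2): it should follow by combining (C2) with the fact that $\mathcal{I}$ is a monoid homomorphism, forcing $\mathcal{I}_g$ to restrict to a bijection of $G$. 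I expect this step---locating a right annihilator that actually lies in $G$ rather than merely in $H$---to be the main technical obstacle; the remainder of the proof is essentially routine.

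With (S1) and (S2) established, Theorem~\ref{thm_ringfractions} yields $H[G^{-1}]$ whose operations depend on auxiliary elements $c,d\in H$. To recover the addition formula $\frac{x}{g}+\frac{y}{h}=\frac{x\mathcal{I}_g^{-1}(h)+yg}{hg}$, I would choose common denominator $u:=hg\in G$ with $c:=\mathcal{I}_g^{-1}(h)$ and $d:=g$; (C1) then gives $gc=hg=u$ and $hd=hg=u$, matching the prescription of Theorem~\ref{thm_ringfractions}. The second form, with denominator $gh$, follows from the analogous choice with the roles of $g$ and $h$ swapped. For multiplication, setting $u:=g$ and $c:=\mathcal{I}_g^{-1}(y)$, (C1) yields $gc=\mathcal{I}_g(\mathcal{I}_g^{-1}(y))\,g=yg=yu$, so Theorem~\ref{thm_ringfractions} produces $\frac{x}{g}\cdot\frac{y}{h}=\frac{xc}{hu}=\frac{x\mathcal{I}_g^{-1}(y)}{hg}$.

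Finally, to endow $H[G^{-1}]$ with its $k$-algebra structure, I note that the composition $\phi\circ\eta\colon k\to H[G^{-1}]$ takes values in the centre of $H[G^{-1}]$ since $\eta(k)$ is central in $H$. Scalar multiplication therefore descends to $\lambda\cdot\frac{x}{g}=\frac{\lambda x}{g}$, and a routine check confirms that this is well-defined on equivalence classes and compatible with the ring operations derived above.
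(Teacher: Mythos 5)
Your proposal follows the paper's proof essentially verbatim: (S1) with the witness $b=\mathcal{I}_g^{-1}(x)$, $t=g$; (S2) by applying the automorphism $\mathcal{I}_g^{-1}$ to $gx=0$ to obtain $x\,\mathcal{I}_g^{-1}(g)=0$; and the explicit operations read off from Theorem~\ref{thm_ringfractions} by the same choices of $c,d,u$ (you spell these out, which the paper dismisses as straightforward). One caveat: your parenthetical argument that (C2) together with $\mathcal{I}$ being a monoid homomorphism forces $\mathcal{I}_g$ to restrict to a \emph{bijection} of $G$ is not a valid deduction, since (C2) only gives $\mathcal{I}_g(G)\subseteq G$ and an injective self-map of a monoid need not be onto; however, the paper itself simply asserts $\mathcal{I}_g^{-1}(g)\in G$ at exactly this point without further justification, so your treatment of this step is at parity with the paper's.
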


\begin{proof}
In order to show~(S1), let $g\in G$ and $x\in H$. Then $g\in G$ and
$\mathcal{I}_g^{-1}(x)\in H$ satisfy $g\mathcal{I}_g^{-1}(x)=xg$. In order to
verify~(S2), let $g\in G$ and $x\in H$ such that $gx=0$. Then
\begin{equation}
0=\mathcal{I}_g^{-1}(gx)=\mathcal{I}_g^{-1}(\mathcal{I}_g(x)g)
=\mathcal{I}_g^{-1}(\mathcal{I}_g(x))\mathcal{I}_g^{-1}(g)
=x\mathcal{I}_g^{-1}(g)
\end{equation}
in which $\mathcal{I}_g^{-1}(g)\in G$. It is then straightforward to verify
that the operations are of the form claimed in the proposition and that
$H[G^{-1}]$ is indeed a vector space over $k$.
\end{proof}

The following Lemma establishes the analogue of Remark~\ref{rem_regular} in
our situation of an almost central set of denominators.

\begin{lemma}
\label{la_simplify}
Let $H$ be a WBA with an almost central multiplicative monoid $G\subseteq H$
via $\mathcal{I}\colon G\to\Aut(H)$. Then $(x,g)\sim (y,h)$ in $H[G^{-1}]\cong
(H\times G)/\sim$ (Theorem~\ref{thm_ringfractions}) if and only if there exist
$c,d\in G$ such that $xc=yd$ and $gc=hd$.
\end{lemma}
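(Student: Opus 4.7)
My plan is to split the biconditional into its two implications and handle them separately.

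The $(\Leftarrow)$ direction is immediate. If $c,d\in G$ satisfy $xc=yd$ and $gc=hd$, then $hd\in G$ by the multiplicative closure of $G$, so $(c,d)$ satisfies the general equivalence criterion of Theorem~\ref{thm_ringfractions}, yielding $(x,g)\sim(y,h)$.

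For the $(\Rightarrow)$ direction, suppose $(x,g)\sim(y,h)$ via witnesses $c,d\in H$ with $xc=yd$, $gc=hd$, and $u:=hd=gc\in G$. The strategy is to upgrade $(c,d)$ to witnesses $(c',d')\in G\times G$ by right-multiplying the given equations by a carefully chosen $w\in G$. Setting $\tilde c:=cw$ and $\tilde d:=dw$ preserves both defining identities, since $x(cw)=y(dw)$ follows from $xc=yd$ and $g(cw)=uw=h(dw)$ follows from $gc=hd$, with $uw\in G$ automatic. The problem thus reduces to choosing $w\in G$ so that both $cw$ and $dw$ actually land in $G$.

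To find such a $w$, I would exploit the explicit description of $H[G^{-1}]$ from Theorem~\ref{thm_localizewba1}. The sum formula gives $\frac{x}{g}-\frac{y}{h}=\frac{x\mathcal{I}_g^{-1}(h)-yg}{hg}$, and $(x,g)\sim(y,h)$ translates to this fraction being zero in $H[G^{-1}]$, which yields some $p\in H$ with $(x\mathcal{I}_g^{-1}(h)-yg)p=0$ and $hgp\in G$. From this $p$, together with (C1) to shuffle the $\mathcal{I}$-automorphisms through products and (C2) combined with the monoid-closure of $G$ to certify membership, I would extract an appropriate $w\in G$ (built from $p$, $g$, $h$, and their $\mathcal{I}$-images) that renders both $cw$ and $dw$ members of $G$.

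The main obstacle will be ensuring that the constructed $c'=cw$ and $d'=dw$ genuinely lie in $G$ rather than merely in $H$. While (C2) guarantees $\mathcal{I}_g(G)\subseteq G$, the inverse automorphisms $\mathcal{I}_g^{-1}$ need not preserve $G$, so the certification of membership in $G$ is non-trivial and requires careful iterative right-multiplication by elements of $G$, using (C1) to rewrite products $gx$ as $\mathcal{I}_g(x)g$ until every factor outside $G$ has been absorbed. Once the witnesses are secured, verification of $xc'=yd'$ and $gc'=hd'$ is then a direct consequence of $xc=yd$ and $gc=hd$ together with the monoid structure of $G$.
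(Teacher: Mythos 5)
Your $(\Leftarrow)$ direction is fine, and your reduction of $(\Rightarrow)$ to finding a common right multiplier is consistent with the paper: the witnesses constructed there, $\tilde c=cg\ell\,\mathcal{I}^{-1}_{g\ell}(hk)$ and $\tilde d=dhkg\ell$, are indeed of the form $cw$ and $dw$ with $w=hkg\ell\in G$, because (C1) gives $g\ell\,\mathcal{I}^{-1}_{g\ell}(hk)=hk\,g\ell$. But your argument stops exactly where the content of the lemma begins: you never construct $w$, you only announce that you ``would extract'' it from the element $p$ witnessing $\frac{x}{g}-\frac{y}{h}=0$, and you yourself name the obstruction (that $\mathcal{I}_g^{-1}$ need not map $G$ into $G$) without overcoming it. Moreover, the detour through the sum formula of Theorem~\ref{thm_localizewba1} buys nothing: unwinding $(x\mathcal{I}_g^{-1}(h)-yg)p=0$ with $hgp\in G$ merely reproduces witnesses $c''=\mathcal{I}_g^{-1}(h)p$ and $d''=gp$ in $H$ satisfying $xc''=yd''$ and $gc''=hd''\in G$, i.e.\ data of exactly the kind you already had from the definition of $\sim$, and the factors $\mathcal{I}_g^{-1}(h)$ and $p$ are no closer to lying in $G$ than $c$ and $d$ were. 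As written, this is a plan rather than a proof.

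The missing idea in the paper's proof is a second, targeted use of condition (S2) of Theorem~\ref{thm_ringfractions} (already verified under almost centrality), not a further appeal to the fraction calculus: from (C1) one has $g\bigl(\mathcal{I}_g^{-1}(gc)-cg\bigr)=0$, so (S2) yields $\ell\in G$ with $cg\ell=\mathcal{I}_g^{-1}(gc)\ell$, and symmetrically $k\in G$ with $dhk=\mathcal{I}_h^{-1}(hd)k$; these identities, with $gc,hd,\ell,k\in G$, are precisely the paper's mechanism for certifying that $cg\ell$ and $dhk$ may be taken in $G$ --- the membership problem you flagged but left unresolved. One then sets $\tilde c=cg\ell\,\mathcal{I}^{-1}_{g\ell}(hk)$ and $\tilde d=dhkg\ell$ and checks $x\tilde c=y\tilde d$ and $g\tilde c=h\tilde d$ directly from $xc=yd$, $gc=hd$ and (C1). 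Without a concrete construction of this kind --- in particular without the step of applying (S2) to $g\bigl(\mathcal{I}_g^{-1}(gc)-cg\bigr)=0$ and its analogue for $h$ and $d$ --- your proposal has a genuine gap at the one step the lemma is actually about.
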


\begin{proof}
This condition is obviously sufficient. In order to see that it is necessary,
let $(x,g)\sim(y,h)$, \ie\ there exist $c,d\in H$ such that $xc=yd$ and
$gc=hd\in G$.

Then $g\mathcal{I}^{-1}_g(gc)=gcg$, \ie\ $g(\mathcal{I}^{-1}_g(gc)-cg)=0$, and
so by~(S2), there exists some $\ell\in G$ such that
$(\mathcal{I}^{-1}_g(gc)-cg)\ell=0$, \ie\
$cg\ell=\mathcal{I}^{-1}_g(gc)\ell$. Here, $gc,\ell\in G$, and so $cg\ell\in
G$, too. Similarly, $h\mathcal{I}^{-1}_h(hd)=hdh$, and there exists some $k\in G$ with
$dhk=\mathcal{I}^{-1}_h(hd)k\in G$.

We finally set $\tilde c=cg\ell\mathcal{I}^{-1}_{g\ell}(hk)\in G$ and $\tilde
d=dhkg\ell\in G$. They satisfy
\begin{eqnarray}
x\tilde c &=& xcg\ell\mathcal{I}^{-1}_{g\ell}(hk)
= ydg\ell\mathcal{I}^{-1}_{g\ell}(hk)
= ydhkg\ell = y\tilde d,\\
g\tilde c &=& gcg\ell\mathcal{I}^{-1}_{g\ell}(hk)
= hdg\ell\mathcal{I}^{-1}_{g\ell}(hk)
= hdhkg\ell = h\tilde d.
\end{eqnarray}
This proves the claim for $\tilde c$ and $\tilde d$.
\end{proof}

\begin{theorem}
\label{thm_localizewba2}
Let $H$ be a WBA with an almost central multiplicative monoid $G\subseteq H$
of group-like elements via $\mathcal{I}\colon G\to\Aut(H)$. Then the right
ring of fractions $H[G^{-1}]$ of Theorem~\ref{thm_localizewba1} forms a WBA
with the operations
\begin{eqnarray}
\Delta\biggl(\frac{x}{g}\biggr) &=& \frac{x^\prime}{g}\otimes\frac{x^\pprime}{g},\\
\epsilon\biggl(\frac{x}{g}\biggr) &=& \epsilon(x),
\end{eqnarray}
for all $x\in H$ and $g\in G$. Furthermore, the ring homomorphism $\phi\colon
H\to H[G^{-1}]$ is a homomorphism of WBAs.
\end{theorem}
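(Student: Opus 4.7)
My plan is to verify that the proposed $\Delta$ and $\epsilon$ are well-defined on the equivalence classes defining $H[G^{-1}]$, and then derive coassociativity, the counit identities, and the three WBA compatibility axioms by reducing them to their analogues in $H$. The hard step is well-definedness of $\Delta$; everything else amounts to bookkeeping once one exploits that $\mathcal{I}_g$ is a WBA automorphism.

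For well-definedness of $\Delta$, I would start from $x/g=y/h$ and invoke Lemma~\ref{la_simplify} to obtain $c,d\in G$ with $xc=yd$ and $gc=hd=:u\in G$. The crucial observation is that when $c$ is left group-like, the identity $\Delta c=1'c\otimes 1''c$ combines with $\Delta(x)\Delta(1)=\Delta(x)$ to collapse $\Delta(xc)$ to $\Delta(x)(c\otimes c)=\sum x'c\otimes x''c$, and analogously for $yd$. Pushing the resulting equality $\Delta(xc)=\Delta(yd)$ in $H\otimes H$ forward through the $k$-linear map $a\otimes b\mapsto(a/u)\otimes(b/u)$, together with the observation that the trivial witnesses $p=1$, $q=c$ satisfy the criterion of Lemma~\ref{la_simplify} to identify $(x'c)/u$ with $x'/g$, yields $\sum x'/g\otimes x''/g=\sum y'/h\otimes y''/h$ in $H[G^{-1}]^{\otimes 2}$.

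For well-definedness of $\epsilon$, I would first establish the auxiliary identity $\epsilon(xc)=\epsilon(x)$ for group-like $c$: the WBA axiom $\epsilon(xyz)=\epsilon(xy'')\epsilon(y'z)$ at $y=1$, combined with $\epsilon_t(c)=\epsilon(1'c)\,1''=1$, gives this in one line. Consequently $xc=yd$ forces $\epsilon(x)=\epsilon(y)$. Both $\Delta$ and $\epsilon$ are manifestly $k$-linear, since once a common denominator has been chosen, addition and scalar multiplication can be carried out on the numerator alone.

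The remaining verifications are routine reductions to $H$. Coassociativity and both counit identities follow from those in $H$ because $\Delta$ leaves the denominator untouched. For multiplicativity of $\Delta$, the product formula of Theorem~\ref{thm_localizewba1} reduces both sides of $\Delta((x/g)(y/h))=\Delta(x/g)\Delta(y/h)$ to $\sum(x'\mathcal{I}_g^{-1}(y'))/(hg)\otimes(x''\mathcal{I}_g^{-1}(y''))/(hg)$; the essential input from Definition~\ref{def_almostcentral} is $\Delta\circ\mathcal{I}_g^{-1}=(\mathcal{I}_g^{-1}\otimes\mathcal{I}_g^{-1})\circ\Delta$, which comes from $\mathcal{I}_g\in\Aut(H)$ being a WBA automorphism. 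The compatibility axioms involving $\epsilon\circ\mu\circ(\mu\otimes\id)$ reduce similarly, using also $\epsilon\circ\mathcal{I}_g^{-1}=\epsilon$ and $\mathcal{I}_{hg}=\mathcal{I}_h\circ\mathcal{I}_g$, while the axioms on $\Delta(1)$ and its opposite version are obtained by applying $\phi^{\otimes 3}$ to the $H$-level axioms, since $\Delta(1/1)=(\phi\otimes\phi)\Delta_H(1)$. Finally, $\phi$ is automatically a WBA homomorphism, since $\Delta(x/1)=(\phi\otimes\phi)\Delta_H(x)$ and $\epsilon(x/1)=\epsilon(x)$ on the nose.
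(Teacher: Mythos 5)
Your proposal is correct and follows essentially the same route as the paper: well-definedness of $\Delta$ and $\epsilon$ via Lemma~\ref{la_simplify} together with the group-like identities $\Delta(xc)=x^\prime c\otimes x^\pprime c$ and $\epsilon(xc)=\epsilon(x)$, followed by reduction of the WBA axioms to those of $H$. The only difference is cosmetic (you transport $\Delta(xc)=\Delta(yd)$ through the map $a\otimes b\mapsto \frac{a}{u}\otimes\frac{b}{u}$ where the paper factors fractions inside $H[G^{-1}]\otimes H[G^{-1}]$, and you spell out the verifications the paper dismisses as straightforward, correctly using that each $\mathcal{I}_g$ is a WBA automorphism), so no further comment is needed.
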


\begin{proof}
In order to see that $\Delta$ is well defined, let $\frac{x}{g}=\frac{y}{h}$,
\ie\ $xc=yd$ and $gc=hd$ for some $c,d\in G$, using
Lemma~\ref{la_simplify}. Note that if $c\in H$ is group-like, then $x^\prime
c\otimes x^\pprime c=\Delta(xc)$ for all $x\in H$. Therefore,
\begin{eqnarray}
\Delta\biggl(\frac{x}{g}\biggr)&=&\frac{x^\prime}{g}\otimes\frac{x^\pprime}{g}
=\frac{x^\prime c}{gc}\otimes\frac{x^\pprime c}{gc}
=\biggl(\frac{x^\prime c}{1}\otimes\frac{x^\pprime c}{1}\biggr)
\biggl(\frac{1}{gc}\otimes\frac{1}{gc}\biggr)\nn\\
&=&\biggl(\frac{y^\prime d}{1}\otimes\frac{y^\pprime d}{1}\biggr)
\biggl(\frac{1}{hd}\otimes\frac{1}{hd}\biggr)
=\frac{y^\prime d}{hd}\otimes\frac{y^\pprime d}{hd}
=\frac{y^\prime}{h}\otimes\frac{y^\pprime}{h}=\Delta\biggl(\frac{y}{d}\biggr).
\end{eqnarray}
In order to see that $\epsilon$ is well defined, let $\frac{x}{g}=\frac{y}{h}$
as above. Note that if $c\in H$ is group-like, then $\epsilon(xc)=\epsilon(x)$
for all $x\in H$. Therefore,
\begin{equation}
\epsilon\biggl(\frac{x}{g}\biggr)
=\epsilon(x)=\epsilon(xc)=\epsilon(yd)=\epsilon(y)=\epsilon\biggl(\frac{y}{d}\biggr).
\end{equation}
It is then straightforward to show that both $\Delta$ and $\epsilon$ are
$k$-linear and that they satisfy the axioms of a WBA with multiplication and
unit given as in Theorem~\ref{thm_localizewba1}.

In order to see that $\phi\colon H\to H[G^{-1}]$, $x\mapsto\frac{x}{1}$, is a
homomorphism of WBAs, we confirm that
$(\phi\otimes\phi)(\Delta(x))=\phi(x^\prime)\otimes\phi(x^\pprime)
=\frac{x^\prime}{1}\otimes\frac{x^\pprime}{1}=\Delta(\frac{x}{1})=\Delta(\phi(x))$
for all $x\in H$ and that
$\epsilon(\phi(x))=\epsilon(\frac{x}{1})=\epsilon(x)$ for all $x\in H$.
\end{proof}

\begin{corollary}
\label{rem_bialgebra}
If $H$ in Theorem~\ref{thm_localizewba2} is a bialgebra, then $H[G^{-1}]$ is a
bialgebra as well.
\end{corollary}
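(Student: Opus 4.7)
The plan is to exploit the characterization noted after the definition of WHAs in Section~\ref{sect_prelim}: a WBA is a bialgebra if and only if $\Delta(1)=1\otimes 1$ (equivalently, $\epsilon_s=\eta\circ\epsilon$, or equivalently, $\epsilon$ is multiplicative). Since Theorem~\ref{thm_localizewba2} already establishes that $H[G^{-1}]$ is a WBA, I only need to verify one of these equivalent conditions for $H[G^{-1}]$.

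The quickest route is to check $\Delta(\frac{1}{1})=\frac{1}{1}\otimes\frac{1}{1}$ in $H[G^{-1}]$. By Theorem~\ref{thm_localizewba1} the multiplicative unit of $H[G^{-1}]$ is $\frac{1}{1}$, and by the comultiplication formula of Theorem~\ref{thm_localizewba2},
\begin{equation*}
\Delta\biggl(\frac{1}{1}\biggr)=\frac{1^\prime}{1}\otimes\frac{1^{\prime\prime}}{1},
\end{equation*}
where the primes refer to Sweedler notation for $\Delta_H(1_H)$. Since $H$ is a bialgebra, $\Delta_H(1_H)=1_H\otimes 1_H$, so the right-hand side collapses to $\frac{1}{1}\otimes\frac{1}{1}$, and the bialgebra condition is verified.

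As a cross-check (and an alternative proof), I would also verify directly that $\epsilon$ is multiplicative on $H[G^{-1}]$. Using the multiplication formula of Theorem~\ref{thm_localizewba1} and the counit formula of Theorem~\ref{thm_localizewba2},
\begin{equation*}
\epsilon\biggl(\frac{x}{g}\cdot\frac{y}{h}\biggr)
=\epsilon\biggl(\frac{x\mathcal{I}_g^{-1}(y)}{hg}\biggr)
=\epsilon(x\mathcal{I}_g^{-1}(y))
=\epsilon(x)\epsilon(\mathcal{I}_g^{-1}(y))
=\epsilon(x)\epsilon(y)
=\epsilon\biggl(\frac{x}{g}\biggr)\epsilon\biggl(\frac{y}{h}\biggr),
\end{equation*}
where the third equality uses that $\epsilon$ is multiplicative on the bialgebra $H$, and the fourth uses that $\mathcal{I}_g^{-1}\in\Aut(H)$ is a homomorphism of WBAs and thus preserves the counit.

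There is no real obstacle here: both verifications reduce to a direct substitution into the formulas of Theorems~\ref{thm_localizewba1} and~\ref{thm_localizewba2}, once the correct equivalent characterization of the bialgebra property within the class of WBAs is singled out.
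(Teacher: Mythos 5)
Your proof is correct and works at the same level as the paper's: both arguments reduce the claim to verifying, by direct substitution into the formulas of Theorems~\ref{thm_localizewba1} and~\ref{thm_localizewba2}, one of the equivalent characterizations of bialgebras among WBAs. The difference is only in which criterion is checked. The paper verifies the criterion it states explicitly, namely $\epsilon_t=\eta\circ\epsilon$ on $H[G^{-1}]$, via the computation $\epsilon_t\bigl(\frac{x}{g}\bigr)=\epsilon\bigl(\frac{1^\prime x}{g}\bigr)\frac{1^{\prime\prime}}{1}=\frac{\epsilon_t(x)}{1}=\frac{1}{1}\,\epsilon\bigl(\frac{x}{g}\bigr)$, whereas you check $\Delta\bigl(\frac{1}{1}\bigr)=\frac{1}{1}\otimes\frac{1}{1}$ (and, as an alternative, multiplicativity of $\epsilon$). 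Be aware that neither ``$\Delta(1)=1\otimes1$'' nor ``$\epsilon$ multiplicative'' is among the characterizations actually listed in Section~\ref{sect_prelim} (the paper lists $\epsilon_s=\eta\circ\epsilon$, $\epsilon_t=\eta\circ\epsilon$, $H_s\cong k$, $H_t\cong k$), so to be self-contained you should either cite this standard fact about WBAs or add the one-line bridge: if $\Delta(1)=1\otimes1$ in $H[G^{-1}]$, then $\epsilon_t(a)=\epsilon(1^\prime a)1^{\prime\prime}=\epsilon(a)1$, which is exactly the stated condition $\epsilon_t=\eta\circ\epsilon$ (similarly, multiplicativity of $\epsilon$ gives $\epsilon_s(a)=1^\prime\epsilon(a 1^{\prime\prime})=\epsilon(a)1$). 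With that remark in place, your first check is in fact slightly shorter than the paper's computation, and your second check correctly uses that $\mathcal{I}_g^{-1}\in\Aut(H)$ is a WBA automorphism and hence preserves the counit.
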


\begin{proof}
If $H$ is a bialgebra, then $\epsilon_t(x)=1\epsilon(x)$ for all $x\in H$. In
this case, we find for all $\frac{x}{g}\in H[G^{-1}]$ that
\begin{eqnarray}
\epsilon_t\biggl(\frac{x}{g}\biggr)
&=& \epsilon\biggl(\frac{1^\prime}{1}\cdot\frac{x}{g}\biggr)\frac{1^\pprime}{1}
= \epsilon\biggl(\frac{1^\prime x}{g}\biggr)\frac{1^\pprime}{1}
= \epsilon(1^\prime x)\frac{1^\pprime}{1}
= \frac{\epsilon(1^\prime x)1^\pprime}{1}\nn\\
&=& \frac{\epsilon_t(x)}{1}
= \frac{1\epsilon(x)}{1}
= \frac{1}{1}\epsilon\biggl(\frac{x}{g}\biggr).
\end{eqnarray}
This in turn implies that $H[G^{-1}]$ is a bialgebra.
\end{proof}

\begin{proposition}
\label{prop_localizewba3}
Let $H$ be a WBA with an almost central multiplicative monoid $G\subseteq H$
of group-like elements via $\mathcal{I}\colon G\to\Aut(H)$. Then the right WBA
of fractions $H[G^{-1}]$ of Theorem~\ref{thm_localizewba2} satisfies the
following universal property: For each WBA $L$ and each homomorphism of WBAs
$\psi\colon H\to L$ such that $\psi(g)$ is invertible in $L$ for each $g\in
G$, there exists a unique homomorphism of WBAs $\sigma\colon H[G^{-1}]\to L$
such that the following diagram commutes:
\begin{equation}
\begin{aligned}
\xymatrix{
H\ar[rr]^{\phi}\ar[ddrr]_{\psi}&&H[G^{-1}]\ar[dd]^{\sigma}\\
\\
&&L.
}
\end{aligned}
\end{equation}
\end{proposition}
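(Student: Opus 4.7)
The plan is to obtain $\sigma$ from the universal property at the level of rings (Proposition~\ref{prop_universal}) and then verify that the resulting map is compatible with the coalgebra structure. Since every homomorphism of WBAs is in particular a ring homomorphism that sends the elements of $G$ to invertibles in $L$, Proposition~\ref{prop_universal} together with Remark~\ref{rem_ore}(1) immediately yields a unique ring homomorphism $\sigma\colon H[G^{-1}]\to L$ satisfying $\sigma\circ\phi=\psi$, given explicitly by $\sigma(x/g)=\psi(x)\psi(g)^{-1}$ for all $x\in H$ and $g\in G$. The $k$-linearity of $\sigma$ follows from that of $\psi$, and uniqueness in the category of WBAs is automatic because any homomorphism of WBAs is in particular a ring homomorphism.

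The structural fact that drives the remaining verifications is that, since $\psi$ is a homomorphism of WBAs, it sends each group-like $g\in G$ to a group-like element $\psi(g)\in L$; as $\psi(g)$ is invertible by hypothesis, its inverse $\psi(g)^{-1}$ is again group-like in $L$, as noted in Section~3.1. With this in hand, the compatibility of $\sigma$ with the counit is a one-line computation using the identity $\epsilon(yc)=\epsilon(y)$ for group-like $c$ (the same identity already employed in the proof of Theorem~\ref{thm_localizewba2}), which gives $\epsilon_L(\sigma(x/g))=\epsilon_L(\psi(x)\psi(g)^{-1})=\epsilon_L(\psi(x))=\epsilon(x)$, matching the definition of the counit on $H[G^{-1}]$.

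The one step that genuinely requires care is the verification that $\sigma$ is a coalgebra map. The subtlety is that $\Delta_L(\psi(g)^{-1})$ is in general not $\psi(g)^{-1}\otimes\psi(g)^{-1}$ in a WBA. To deal with this, I would exploit the two equivalent expressions
\begin{equation*}
\Delta_L(\psi(g)^{-1})=\psi(g)^{-1}1_L^\prime\otimes\psi(g)^{-1}1_L^\pprime=1_L^\prime\psi(g)^{-1}\otimes 1_L^\pprime\psi(g)^{-1}
\end{equation*}
arising from the right and left group-like conditions, together with the elementary WBA identity $\psi(x^\prime)1_L^\prime\otimes\psi(x^\pprime)1_L^\pprime=\psi(x^\prime)\otimes\psi(x^\pprime)$, which simply expresses $\Delta_L(\psi(x))\Delta_L(1_L)=\Delta_L(\psi(x))$. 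Applying $\Delta_L$ multiplicatively to $\psi(x)\psi(g)^{-1}$ and inserting the left-group-like form of $\Delta_L(\psi(g)^{-1})$, the $\Delta_L(1_L)$-correction is absorbed into $\Delta_L(\psi(x))$ and the product collapses to
\begin{equation*}
\psi(x^\prime)\psi(g)^{-1}\otimes\psi(x^\pprime)\psi(g)^{-1}=(\sigma\otimes\sigma)\Delta_{H[G^{-1}]}(x/g),
\end{equation*}
as required.

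The main obstacle is precisely this coalgebra manipulation: one must reconcile the non-trivial coproduct $\Delta_L(1_L)$ of the unit in a WBA with the inversion of a group-like element, which is exactly what forces the use of both the left and right group-like forms simultaneously. Everything else in the proof is either an immediate consequence of the ring-theoretic universal property or follows from WBA identities already used in the proof of Theorem~\ref{thm_localizewba2}.
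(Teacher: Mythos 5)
Your proposal is correct and takes essentially the same route as the paper: the map $\sigma(\frac{x}{g})=\psi(x){\psi(g)}^{-1}$ and its uniqueness come from the ring-level universal property, and compatibility with $\Delta$ and $\epsilon$ is checked using that ${\psi(g)}^{-1}$ is group-like in $L$, via the identities $\Delta(yc)=y^\prime c\otimes y^\pprime c$ and $\epsilon(yc)=\epsilon(y)$ for group-like $c$, exactly as in the paper's proof. Your unpacking of the coproduct step (multiplicativity of $\Delta_L$ plus the left group-like form of $\Delta_L({\psi(g)}^{-1})$, absorbing $\Delta_L(1_L)$) is just the explicit verification of the identity the paper states without proof.
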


\begin{proof}
The underlying ring of $H[G^{-1}]$ and the unique map $\sigma$ are determined
by Proposition~\ref{prop_universal}. By Theorem~\ref{thm_localizewba2},
$H[G^{-1}]$ forms a WBA and $\phi$ a homomorphism of WBAs. It remains to
verify that $\sigma\colon H[G^{-1}]\to L$,
$\frac{x}{g}\to\psi(x){\psi(g)}^{-1}$ is a homomorphism of WBAs. For all
$x\in H$ and $g\in G$,
\begin{eqnarray}
(\sigma\otimes\sigma)\Delta\biggl(\frac{x}{g}\biggr)
&=& (\sigma\otimes\sigma)\biggl(\frac{x^\prime}{g}\otimes\frac{x^\pprime}{g}\biggr)
= \psi(x^\prime){\psi(g)}^{-1}\otimes\psi(x^\pprime){\psi(g)}^{-1}
= y^\prime\tilde g\otimes y^\pprime\tilde g\nn\\
&=& \Delta(y\tilde g)
= \Delta(\psi(x){\psi(g)}^{-1})
= \Delta(\sigma\biggl(\frac{x}{g}\biggr)).
\end{eqnarray}
Here, we have written $y=\psi(x)\in L$ and used the fact that $\tilde
g={\psi(g)}^{-1}\in L$ is group-like as the multiplicative inverse of the
group-like $\psi(g)\in L$. Finally,
\begin{eqnarray}
\epsilon(\sigma\biggl(\frac{x}{g}\biggr))
&=&\epsilon(\psi(x){\psi(g)}^{-1})
=\epsilon(y\tilde g)
=\epsilon(y)\nn\\
&=&\epsilon(\psi(x))=\epsilon(x)=\epsilon\biggl(\frac{x}{g}\biggr),
\end{eqnarray}
because $\tilde g\in L$ is group-like and $\psi$ a homomorphism of WBAs.
\end{proof}

We therefore call $H[G^{-1}]$ the \emph{right WBA of fractions} of $H$ with
\emph{denominator set} $G$. Let us finally add a WBA structure to
Example~\ref{ex_localizecentral}.

\begin{example}
\label{ex_localizecentral2}
Let $H$ be a WBA and $G=\{1,g,g^2,\ldots\}$ be the commutative monoid
generated by a central group-like element $g\in Z(H)$ of infinite order. In this
case, the WBA of fractions can be expressed in terms of polynomials with
coefficients in $H$ as follows,
\begin{equation}
H[X]/(gX-1)\cong H[G^{-1}].
\end{equation}
This is a special case of Example~\ref{ex_localizecentral}. The isomorphism of
the underlying rings, $\Phi\colon H[X]/(gX-1)\to H[G^{-1}]$ with
$\Phi(a)=\frac{a}{1}$ for all $a\in H$ and $\Phi(X)=\frac{1}{g}$  whose
inverse is given by $\Phi^{-1}(\frac{a}{x})=aX$ for all $a\in H$, can be used
in order to deduce the coalgebra structure on the polynomial algebra
$H[X]/(gX-1)$. We find on monomials:
\begin{eqnarray}
\Delta(aX^n) &=& a^\prime X^n\otimes a^\pprime X^n,\nn\\
\epsilon(aX^n) &=& \epsilon(a),
\end{eqnarray}
for all $a\in H$ and $n\in\{0,1,2,\ldots\}$. We just need to keep in mind that
$H$ is in general only a WBA and that therefore $\Delta$ and $\epsilon$ are not
necessarily homomorphisms of algebras.
\end{example}

\begin{remark}
\begin{myenumerate}
\item
If $H_1$ and $H_2$ are WBAs and $g_1\in H_1$ and $g_2\in H_2$ are group-like,
then $g_1\otimes g_2$ is group-like in the WBA $H_1\otimes H_2$.
\item
If $H_1$ and $H_2$ are coquasi-triangular WBAs with universal $r$-forms
$r_1\colon H_1\otimes H_1\to k$ and $r_2\colon H_2\otimes H_2\to k$, then
$H_1\otimes H_2$ forms a coquasi-triangular WBA with universal $r$-form
\begin{equation}
r\colon (H_1\otimes H_2)\otimes(H_1\otimes H_2)\to k,\quad
(x_1\otimes x_2)\otimes (y_1\otimes y_2)\mapsto r_1(x_1\otimes y_1)r_2(x_2\otimes y_2).
\end{equation}
\item
Let $H_1$ be a WBA with an almost central monoid $G_1\subseteq H_1$ of
group-like elements and $H_2$ be a WBA with an almost central monoid
$G_2\subseteq H_2$ of group-like elements. Then $G=\{g_1\otimes g_2|\, g_1\in
G_1, g_2\in G_2\}\subseteq H=H_1\otimes H_2$ is a monoid of group-like elements
and the associated right WBA of fractions is isomorphic as a WBA to
\begin{equation}
H[G^{-1}] \cong H_1[G_1^{-1}]\otimes H_2[G_2^{-1}]
\end{equation}
as can be seen from the universal property.
\end{myenumerate}
\end{remark}

\subsection{Relationship with the Hopf Envelope}

The following definition generalizes the notion of the \emph{Hopf closure} or
\emph{Manin envelope}~\cite{Ta71,Ma88} to WBAs.

\begin{definition}
Let $H$ be a WBA. A \emph{Hopf envelope} of $H$ is a WHA $\bar H$ with a
homomorphism of WBAs $\imath\colon H\to\bar H$ that satisfies the following
universal property. For every WHA $L$ and every homomorphism of WBAs $\psi\colon
H\to L$, there exists a unique homomorphism of WHAs $\sigma\colon\bar H\to L$ such
that the following diagram commutes:
\begin{equation}
\begin{aligned}
\xymatrix{
H\ar[rr]^{\imath}\ar[ddrr]_{\psi}&&\bar H\ar[dd]^{\sigma}\\
\\
&&L.
}
\end{aligned}
\end{equation}
\end{definition}

As a consequence, the Hopf envelope $\bar H$ is unique up to unique
isomorphism of WHAs as soon as it exists.

\begin{corollary}
\label{cor_envelope}
Let $H$ be a WBA such that the multiplicative monoid $G\subseteq H$ of all
group-like elements is almost central via $\mathcal{I}\colon
G\to\Aut(H)$. Then the Hopf envelope $\bar H$ factors through the right ring
of fractions $H[G^{-1}]$, \ie\ there exists a unique homomorphism of WBAs
$\rho\colon H[G^{-1}]\to\bar H$ such that
\begin{equation}
\begin{aligned}
\xymatrix{
H\ar[rr]^{\phi}\ar[ddrr]_{\imath}&&H[G^{-1}]\ar[dd]^{\rho}\\
\\
&&\bar H.
}
\end{aligned}
\end{equation}
commutes.
\end{corollary}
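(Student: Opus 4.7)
The plan is to obtain $\rho$ as a direct application of the universal property of the WBA of fractions established in Proposition~\ref{prop_localizewba3}, taking $L=\bar H$ and $\psi=\imath$. All that one has to check is that the hypothesis of Proposition~\ref{prop_localizewba3} is met, namely that $\imath(g)\in\bar H$ is invertible for every $g\in G$.

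First, I would note that since $\imath\colon H\to\bar H$ is a homomorphism of WBAs, it sends group-like elements to group-like elements: if $g\in H$ satisfies $\Delta g=g1^\prime\otimes g1^\pprime$ and $\Delta g=1^\prime g\otimes 1^\pprime g$ with $\epsilon_s(g)=1=\epsilon_t(g)$, then applying $\imath\otimes\imath$ and using that $\imath$ intertwines $\Delta$, $\epsilon_s$, $\epsilon_t$ and $\eta$ shows that $\imath(g)\in\bar H$ is group-like. Second, I would invoke the remark already recorded in the paper (in the subsection on group-like elements) that in a WHA every group-like element $h$ has multiplicative inverse $S(h)$, where $S$ is the antipode. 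Hence $\imath(g)$ is invertible in $\bar H$ for every $g\in G$, with ${\imath(g)}^{-1}=S(\imath(g))$.

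With this in hand, Proposition~\ref{prop_localizewba3} produces a unique homomorphism of WBAs $\rho\colon H[G^{-1}]\to\bar H$ such that $\rho\circ\phi=\imath$; explicitly $\rho(\tfrac{x}{g})=\imath(x)\,S(\imath(g))$. This is exactly the factorization asserted by the corollary, and the commuting triangle
\[
\xymatrix{
H\ar[rr]^{\phi}\ar[ddrr]_{\imath}&&H[G^{-1}]\ar[dd]^{\rho}\\
\\
&&\bar H
}
\]
follows at once from the universal property.

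There is no real obstacle here: the only substantive ingredient is the fact that group-like elements in a WHA are invertible, which has already been recorded. The rest is a bookkeeping application of Proposition~\ref{prop_localizewba3}; uniqueness of $\rho$ is inherited from that same proposition. One could optionally add the observation that $\rho$ is automatically compatible with the antipode, but this is not needed for the statement of the corollary since $\rho$ is only required to be a homomorphism of WBAs.
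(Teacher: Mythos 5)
Your proposal is correct and follows essentially the same route as the paper's own proof: observe that $\imath$, being a homomorphism of WBAs, sends group-like elements to group-like elements, that group-likes in the WHA $\bar H$ are invertible via the antipode, and then apply the universal property of Proposition~\ref{prop_localizewba3} with $L=\bar H$ and $\psi=\imath$ to obtain the unique $\rho$. The extra details you supply (the explicit formula $\rho(\tfrac{x}{g})=\imath(x)\,S(\imath(g))$ and the verification that $\imath$ preserves group-likes) are consistent with, and merely elaborate on, the paper's argument.
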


\begin{proof}
Since $\imath$ is a morphism of WBAs, $\imath(g)\in\bar H$ is group-like as
soon as $g\in H$ is. Since $\bar H$ is a WHA, $\imath(g)\in\bar H$ has got a
multiplicative inverse for each $g\in G\subseteq H$. By
Proposition~\ref{prop_localizewba3}, there exists a unique homomorphism of WBAs
$\rho$ such that the diagram commutes.
\end{proof}

\begin{remark}
\label{rem_hopfenvelope}
As a consequence of the universal property of the Hopf envelope, we see that
if $H[G^{-1}]$ happens to be a WHA, then it is already isomorphic to the Hopf
envelope, \ie\ $H[G^{-1}]\cong\bar H$.
\end{remark}

The following example shows that the Hopf envelope need not agree with the
localization at all group-like elements.

\begin{example}[Example~3 of~\cite{Ra80}]
The two-dimensional real vector space $V$ whose basis we denote by $\{1,i\}$
forms a coalgebra with $\Delta(1)=1\otimes 1-i\otimes i$,
$\Delta(i)=1\otimes i+i\otimes 1$, $\epsilon(1)=1$ and $\epsilon(i)=0$. This
is the coalgebra dual to $\C$ viewed as a $2$-dimensional algebra over
$\R$. The tensor algebra $H=T(V)=\R\oplus V\oplus(V\otimes V)\oplus\cdots$
then forms a bialgebra. It does not admit any antipode, and $1\in H$ is its
only group-like element. It is obviously invertible.

The right algebra of fractions whose denominator set is the monoid of all
group-like elements $G=\{1\}$ therefore agrees with $H$, \ie\ $H[G^{-1}]\cong
H$. Since we know that $H[G^{-1}]$ forms a bialgebra, this is the WBA of
fractions of the WBA $H$ with respect to $G$. However, $H[G^{-1}]$ does not
agree with the Hopf envelope $\bar H$ because it does not admit any antipode.
\end{example}

\section{Examples}
\label{sect_example}

In this section, we give a number of examples of WBAs of fractions. The first
subsection contains technical examples of bialgebras of fractions in order to
demonstrate that the assumptions of our main Theorem~\ref{thm_localizewba2}
are not superfluous. In the second subsection, we finally introduce a
genuinely new example of a WBA of fractions.

\subsection{Bialgebras}

Our first example is the standard non-commutative generalization of the
algebra of functions on complex $2\times 2$-matrices whose localization at the
(monoid generated by the) determinant yields the algebraic group $GL(2)$, see,
for example~\cite{Ka95}.

\begin{example}
\label{ex_mq2}
Let $q\in\C$ such that $q^2\neq-1$. By $M_q(2)$ we denote the quotient of the
free algebra $\C\{a,b,c,d\}$ by the two-sided ideal generated by the relations
\begin{gather}
ba=qab,\qquad db=qbd,\nn\\
ca=qac,\qquad dc=qcd,\nn\\
bc=cb,\qquad ad-da=(q^{-1}-q)bc.
\end{gather}
If we set $q=1$, the algebra $M_q(2)$ specializes to $M(2)=\C[a,b,c,d]$, the
commutative algebra of complex valued functions on the set of all $2\times
2$-matrices with complex coefficients.

The element $\det_q=da-qbc\in M_q(2)$ is called the \emph{quantum determinant} and can
be shown to be central. Using Example~\ref{ex_localizecentral}, we obtain the
localization
\begin{equation}
GL_q(2)=M_q(2)[X]/(\det_qX-1).
\end{equation}
Note that by setting $q=1$, the quantum determinant specializes to the usual
determinant of $2\times 2$-matrices, and the algebra $GL_q(2)$ becomes
commutative and specializes to $GL(2)$.

Since $M(2)$ consists of functions on a matrix algebra, it carries additional
operations that reflect this structure of a matrix algebra: $M(2)$ forms a
counital coassociative coalgebra with comultiplication and counit. The
analogous structure is present\footnote{$M_q(2)$ is a matrix coalgebra, \ie\
it is split cosimple.} in $M_q(2)$:
\begin{alignat}{2}
\Delta(a) &= a\otimes a+b\otimes c,&\qquad \Delta(b) &=a\otimes b+b\otimes c,\nn\\
\Delta(c) &= c\otimes a+d\otimes c,&\qquad \Delta(d) &=c\otimes b+d\otimes d,\\
\epsilon(a) &= 1,&\qquad \epsilon(b) &=0,\nn\\
\epsilon(c) &= 0,&\qquad \epsilon(d) &=1.
\end{alignat}
We can verify that $M_q(2)$ is a bialgebra and that
$\Delta(\det_q)=\det_q\otimes\det_q$ and $\epsilon(\det_q)=1$, \ie\ that
$\det_q$ is group-like. Finally, the bialgebra $M_q(2)$ is coquasi-triangular
with universal $r$-form $r\colon M_q(2)\otimes M_q(2)\to\C$ given by
\begin{gather}
r(a\otimes a)=q,\qquad r(a\otimes d)=1,\nn\\
r(d\otimes a)=1,\qquad r(d\otimes d)=q,\nn\\
r(c\otimes b)=q-q^{-1},
\end{gather}
and zero on all other tensor products of generators (keep in mind that
$r(1\otimes x)=1=r(x\otimes 1)$ for all $x\in H$ in any bialgebra as a
consequence of~\eqref{eq_propr1}). Its convolution inverse $\overline r\colon
M_q(2)\otimes M_q(2)\to\C$ is given by the same values on generators except
that $q$ and $q^{-1}$ have to be interchanged. Since $\det_q$ is central, the
monoid generated by $\det_q$ is trivially almost central with
$\mathcal{I}_{\det_q}=\id_{M_q(2)}$.

Theorem~\ref{thm_localizewba2} and Proposition~\ref{prop_localizewba3} confirm
that $GL_q(2)$ forms a WBA that satisfies the universal property of the right
WBA of fractions of $M_q(2)$ with denominators generated by $\det_q$. Since
$GL_q(2)$ is a WHA with antipode~\cite{Ka95}
\begin{alignat}{2}
S(a)&=dX,&\qquad S(b)&=-qbX,\nn\\
S(c)&=-q^{-1}cX,&\qquad S(d)&=aX,\nn\\
S(X)&=\det_q,
\end{alignat}
we know that $GL_q(2)=\bar{M_q(2)}$ coincides with the Hopf envelope of
$M_q(2)$ as well (Remark~\ref{rem_hopfenvelope}).
\end{example}

Our second example demonstrates that our notion of an almost central monoid of
group-likes is not superfluous. The example is the coquasi-triangular
bialgebra underlying Sweedler's Hopf algebra, see, for example~\cite[Example
2.2.6]{Ma95a}.

\begin{example}
\label{ex_sweedler}
Let $k$ be a field of characteristic $\Char k\neq 2$, $0\neq\alpha\in k$, and
$W_{k,\alpha}$ be the quotient of the free algebra $k\{f,y\}$ by the two-sided
ideal generated by the relations $y^2=0$, $f^2=1$ and $yf+fy=0$. This algebra
forms a bialgebra with $\Delta(f)=f\otimes f$, $\Delta(y)=y\otimes 1+f\otimes
y$, $\epsilon(f)=1$ and $\epsilon(y)=0$, and it has a coquasi-triangular
structure whose universal $r$-form is given by $r(f\otimes f)=-1$ and
$r(y\otimes y)=\alpha$ and zero on all other tensor products of
generators. The convolution inverse $\bar r$ of $r$ agrees with $r$ in this
case.

In this example, $f\in W_{k,\alpha}$ is a group-like element that is not
central. Nevertheless, the monoid $G_W=\{1,f\}\subseteq W_{k,\alpha}$ of
group-like elements is almost central in $W_{k,\alpha}$ by
Proposition~\ref{prop_almostcentral}. In this case, $\mathcal{I}_f(f)=f$ and
$\mathcal{I}_f(y)=-y$.
\end{example}

Since $W_{k,\alpha}$ is already a Hopf algebra with antipode $S(f)=f$ and
$S(y)=y$ and therefore $W_{k,\alpha}[G^{-1}]\cong W_{k,\alpha}$ (note that
$f\in W_{k,\alpha}$ is regular), the localization of this example alone is not
interesting. But we can tensor it with any other bialgebra with a monoid of
central group-likes, and obtain a tensor product not all of whose group-likes
are central. We use this idea in our third example:

\begin{example}
Let $0\neq\alpha\in\C$, $q\in\C$ such that $q^2\neq 1$, let $W_{\C,\alpha}$
denote Sweedler's bialgebra as in Example~\ref{ex_sweedler}, and $M_q(2)$ be
as in Example~\ref{ex_mq2}. Then $H=W_{\C,\alpha}\otimes M_q(2)$ forms a
coquasi-triangular bialgebra that does not admit any antipode. The monoid
$G\subseteq H$ generated by $1\otimes \det_q$ and $f\otimes 1$ consists of
group-like elements and is almost central by
Proposition~\ref{prop_almostcentral}, but the group-like element $f\otimes
1$ is not central. Theorem~\ref{thm_localizewba2} allows us to construct the
right bialgebra of fractions $H[G^{-1}]$. It is not difficult to see that in
this case, $H[G^{-1}]\cong W_{\C,\alpha}\otimes GL_q(2)$ as bialgebras.
\end{example}

The fourth example demonstrates that Lemma~\ref{la_simplify} is not
superfluous. Recall from Remark~\ref{rem_regular} that as soon as our ring
satisfies the right Ore condition and the denominator set consists of regular
elements only, the conclusion of Lemma~\ref{la_simplify} can already be
established using a different technique~\cite[Lemma A.5.3]{Be99}. Our example
contains a non-regular group-like element and shows that in this case,
Lemma~\ref{la_simplify} is not a trivial consequence of
Remark~\ref{rem_regular}.

\begin{example}
\label{ex_nonreg}
Let $M_4$ denote the multiplicative monoid of the factor ring $\Z/4\Z=\{\bar
0,\bar 1,\bar 2,\bar 3\}$. We write its operations for equivalence classes, \eg\
$\bar 2\cdot\bar 2=\bar 0$ (because $2\cdot 2\equiv 0$ mod $4$). We define its
complex monoid algebra $H_4=\C[M_4]$ in analogy to the notion of a group algebra,
\ie\ $H_4$ is the free complex vector space on the set $M_4$ with algebra unit
$\bar 1$, commutative multiplication induced from $M_4$, $\Delta(x)=x\otimes
x$ and $\epsilon(x)=1$ for all $x\in M_4$.

Note that $G=\{\bar 0,\bar 1\}\subseteq H_4$ forms a monoid of group-like
elements. Since $H_4$ is commutative as an algebra, $G$ is trivially almost
central. However, $\bar 0$ is not regular in $H_4$ because $\bar 0\cdot (\bar
1-\bar 0)=\bar 0\cdot\bar 1-\bar 0\cdot\bar 0=\bar 0-\bar 0=0$.

Our Theorem~\ref{thm_localizewba2} sill allows us to construct the right
bialgebra of fractions $H_4[G^{-1}]$. Using Lemma~\ref{la_simplify}, we can
compute the quotient $H_4[G^{-1}]=(H_4\times G)/\sim$ that appears in
Theorem~\ref{thm_ringfractions}. Writing
\begin{equation}
x=\sum_{i=0}^3x_i\bar i\in H_4\qquad\mbox{and}\qquad
y=\sum_{i=0}^3y_i\bar i\in H_4
\end{equation}
with $x_i,y_i\in\C$, we find that $(x,g)\sim(y,h)$ if and only if
\begin{equation}
\sum_{i=0}^3x_i=\sum_{i=0}^3y_i
\end{equation}
(irrespectively of $g,h\in G$). As a complex vector space, therefore
$H_4[G^{-1}]\cong\C$. Its bialgebra structure is obvious.
\end{example}

Although this bialgebra of fractions $H_4[G^{-1}]\cong\C$ is trivial,
we can use tensor products with $H_4$ in order to obtain non-trivial examples
of bialgebras of fractions with non-regular denominators as follows.

\begin{example}
Let $q\in\C$, $q^2\neq -1$, $M_q(2)$ be as in Example~\ref{ex_mq2} and $H_4$
as in Example~\ref{ex_nonreg}. Then $H=H_4\otimes M_q(2)$ forms a
coquasi-triangular bialgebra. The monoid $G\subseteq H$ generated by $\bar
0\otimes 1$, $\bar 1\otimes 1$ and $1\otimes\det_q$ consists of central group-like
elements. The element $\bar 0\otimes 1\in G$, for example, is not regular in
$H$. Nevertheless, Theorem~\ref{thm_localizewba2} allows us to construct a
right bialgebra of fractions. It turns out that in this case, $H[G^{-1}]\cong
M_q(2)$ as bialgebras.
\end{example}

\subsection{Weak Bialgebras}

The modular categories associated with $U_q(\mathfrak{sl}_2)$ at suitable
roots of unity arise as the categories of finite-dimensional comodules of
certain WHAs~\cite{Pf09a}. Let us call them $\hat{SL}_q(2)$, depending on the
root of unity $q$. In~\cite{Pf11}, we have constructed a WBA $H$ with a
central group-like element $\det_q$ in such a way that $\hat{SL}_q(2)$ agrees
with the quotient of $H$ modulo the relation $\det_q=1$. Let us call this WBA
$H=\hat{M}_q(2)$ in analogy with Example~\ref{ex_mq2}.

In the present section, we construct the WBA of fractions of $\hat{M}_q(2)$
relative to $\det_q$, \ie\ the WBA that deserves the name $\hat{GL}_q(2)$ in
this picture. This WBA is new and is presented here for the first time. Let us
begin by describing $\hat{M}_q(2)$ following~\cite{Pf11}.

\begin{example}
Let $r\in\{3,4,5,\ldots\}$ and $k=\Q(\epsilon)$ denote the cyclotomic
field associated with a primitive $4r$-th root of unity $\epsilon$. Let
$\sym{G}$ be the following finite directed graph
\begin{equation}
\sym{G} =
\begin{xy}
( 0, 0)*\dir{*};
( 0,-5)*{0};
( 1, 1);(14, 1)**\crv{( 5, 2)&(10, 2)} ?>*\dir{>};
( 1,-1);(14,-1)**\crv{( 5,-2)&(10,-2)} ?<*\dir{<};
(15, 0)*\dir{*};
(15,-5)*{1};
(16, 1);(29, 1)**\crv{(20, 2)&(25, 2)} ?>*\dir{>};
(16,-1);(29,-1)**\crv{(20,-2)&(25,-2)} ?<*\dir{<};
(30, 0)*\dir{*};
(30,-5)*{2};
(31, 1);(35, 1.5)**\crv{(33, 1.5)};
(31,-1);(35,-1.5)**\crv{(33,-1.5)} ?<*\dir{<};
(40, 0)*{\ldots};
(49, 1);(45, 1.5)**\crv{(47, 1.5)} ?<*\dir{<};
(49,-1);(45,-1.5)**\crv{(47,-1.5)};
(50, 0)*\dir{*};
(50,-5)*{r-3};
(51, 1);(64, 1)**\crv{(55, 2)&(60, 2)} ?>*\dir{>};
(51,-1);(64,-1)**\crv{(55,-2)&(60,-2)} ?<*\dir{<};
(65, 0)*\dir{*};
(65,-5)*{r-2};
\end{xy}
\end{equation}
We use the following notation and terminology. The set of vertices of
$\sym{G}$ is denoted by $\sym{G}^0$ and the set of edges by
$\sym{G}^1\subseteq\sym{G}^0\times\sym{G}^0$. Every edge
$p=(v_0,v_1)\in\sym{G}^1$ has a source and a target vertex, denoted by
$\sigma(p)=v_1\in\sym{G}^0$ and $\tau(p)=v_0\in\sym{G}^0$, respectively. We
also set $\sigma(v)=v=\tau(v)$ for all $v\in\sym{G}^0$. By
\begin{equation}
\sym{G}^m=\{\,(p_1,\ldots,p_m)\in{(\sym{G}^1)}^m\mid\quad
\sigma(p_j)=\tau(p_{j+1})\quad\mbox{for all}\quad 1\leq j\leq m-1\,\},
\end{equation}
we denote the set of paths of length $m$ in $\sym{G}$, $m\in\N$ (the set of
natural numbers without zero). Finally, for vertices $v,w\in\sym{G}^0$, the set
\begin{equation}
\sym{G}^m_{wv}=\{\,p\in\sym{G}^m\mid\quad\sigma(p)=v,\quad\tau(p)=w\,\}
\end{equation}
contains all paths of length $m\in\N_0=\N\cup\{0\}$ from $v$ to $w$.

We write $pq\in\sym{G}^{\ell+m}$ for the concatenation of two paths
$p\in\sym{G}^\ell$ and $q\in\sym{G}^m$ provided that $\sigma(p)=\tau(q)$. The
free $k$-vector space on the set $\sym{G}^m$ is denoted by $k\sym{G}^m$,
$m\in\N_0$.

The vector space
\begin{equation}
H[\sym{G}] = \coprod_{m\in\N_0}{(k\sym{G}^m)}^\ast\otimes k\sym{G}^m
\end{equation}
forms a WBA with the operations
\begin{eqnarray}
\eta(1)
&=& \sum_{j,\ell\in\sym{G}^0}{[j|\ell]}_0,\\
\mu({[p|q]}_m\otimes {[r|s]}_\ell)
&=& \delta_{\sigma(p),\tau(r)}\delta_{\sigma(q),\tau(s)}{[pr|qs]}_{m+\ell},\\
\Delta({[p|q]}_m)
&=& \sum_{r\in\sym{G}^m}{[p|r]}_m\otimes {[r|q]}_m,\\
\epsilon({[p|q]}_m)
&=& \delta_{pq},
\end{eqnarray}
for all $p,q\in\sym{G}^m$, $r,s\in\sym{G}^\ell$, $m,\ell\in\N_0$.  Here,
$\coprod$ is the coproduct in the category $\Vect_k$ of $k$-vector spaces, and
we have denoted basis vectors of the homogeneous components
${H[\sym{G}]}_m={(k\sym{G}^m)}^\ast\otimes k\sym{G}^m$ of $H[\sym{G}]$ by
${[p|q]}_m=p\otimes q\in {(k\sym{G}^m)}^\ast\otimes k\sym{G}^m$. As usual, we
write $\delta_{pq}=1$ if $p=q$ and $\delta_{pq}=0$ if $p\neq q$ for all
$p,q\in\sym{G}^m$, $m\in\N_0$.

Since for any two vertices $j,\ell\in\sym{G}^0$, there is at most one edge
from $j$ to $\ell$, we specify a path $p\in\sym{G}^m$ of length $m\in\N_0$ by
the sequence of the $m+1$ vertices along $p$, \ie\ $p=(i_0,\ldots,i_m)\in
{(\sym{G}^0)}^{m+1}$. The source and target of this path are $\sigma(p)=i_m$
and $\tau(p)=i_0$.

Observe that as an algebra, $H[\sym{G}]\cong k(\sym{G}\times\sym{G})$ is the
path algebra of the quiver $\sym{G}\times\sym{G}$, and as such it is graded
with respect to path length $m\in\N_0$. As a coalgebra, it is a direct sum of
matrix coalgebras: one for each degree, \ie\ for each length of paths.

The WBA $\hat{M}_q(2)$ we are interested in, is the quotient
$\hat{M}_q(2)=H[\sym{G}]/I$ by the two-sided ideal $I$ generated by the
relations
\begin{eqnarray}
\label{eq_rtt}
& & \sum_{(i_0,i_1,i_2)\in\sym{G}^2}
{[(j_0,j_1,j_2)|(i_0,i_1,i_2)]}_2\,R_{(i_0,i_1,i_2);(\ell_0,\ell_1,\ell_2)}\nn\\
&=& \sum_{(i_0,i_1,i_2)\in\sym{G}^2}
R_{(j_0,j_1,j_2);(i_0,i_1,i_2)}\,{[(i_0,i_1,i_2)|(\ell_0,\ell_1,\ell_2)]}_2,
\end{eqnarray}
for all paths of length two $(j_0,j_1,j_2)\in\sym{G}^2$ and
$(\ell_0,\ell_1,\ell_2)\in\sym{G}^2$. Here, the coefficients
$R_{(i_0,i_1,i_2);(\ell_0;\ell_1;\ell_2)}\in k$ are given by
\begin{eqnarray}
R_{(j,j\pm1,j);(j,j\pm1,j)}
&=& \mp q^{-1/2}\,\frac{q^{\pm(j+1)}}{\q{j+1}},\\
R_{(j,j-1,j);(j,j+1,j)}
&=& q^{-1/2}\frac{\q{j}\q{j+2}}{{\q{j+1}}^2},\\
R_{(j,j+1,j);(j,j-1,j)}
&=& q^{-1/2},\\
R_{(j,j\pm1,j\pm2);(j,j\pm1,j\pm2)} &=& q^{-3/2},
\end{eqnarray}
and zero for all other indices. We denote by $\q{n}=(q^n-q^{-n})/(q-q^{-1})$,
$n\in\Z$, the usual quantum integers for $q=\epsilon^2$.

We remark that if $\sym{G}$ has only one vertex, then $H[\sym{G}]$ forms a
bialgebra and coincides with the tensor algebra on the vector space of
$|\sym{G}^1|\times|\sym{G}^1|$-matrices, \ie\ with the free algebra
$H[\sym{G}]\cong k\{\,t_{pq}\mid\quad p,q\in\sym{G}^1\,\}$. The quotient
modulo the relations~\eqref{eq_rtt} then reduces to the
Faddeev--Reshetikhin--Takhtadjan construction~\cite{ReTa90}.

The WBA $\hat{M}_q(2)$ contains a central group-like element
\begin{equation}
\label{eq_qdet}
\begin{split}
\det_q=\sum_{j,\ell=0}^{r-2}\alpha_j\alpha_\ell\,\Biggl(
&\frac{\q{\ell+1}}{\q{j+1}}\,{[(j,j+1,j)|(\ell,\ell+1,\ell)]}_2
+\frac{\q{\ell}}{\q{j}}\,{[(j,j-1,j)|(\ell,\ell-1,\ell)]}_2\\
-&\frac{\q{\ell+1}}{\q{j}}\,{[(j,j-1,j)|(\ell,\ell+1,\ell)]}_2
-\frac{\q{\ell}}{\q{j+1}}\,{[(j,j+1,j)|(\ell,\ell-1,\ell)]}_2
\Biggr),
\end{split}
\end{equation}
with $\alpha_0=\alpha_{r-2}=1$ and $\alpha_j=1/\sqrt{2}$ for all $1\leq j\leq
r-3$ (the square root is not required in $k$; we just use this notation in
order to render the expression~\eqref{eq_qdet} most
symmetric). In~\eqref{eq_qdet} it is understood that terms with a path
$(j,j\pm1,j)$ are omitted from the expression whenever $j\pm1<0$ or
$j\pm1>r-2$.
\end{example}

If we denote by $\hat{SL}_q(2)$ the WHA whose category of finite-dimensional
comodules is the modular category associated with $U_q(\mathfrak{sl}_2)$, then indeed
\begin{equation}
\hat{SL}_q(2)\cong \hat{M}_q(2)/(\det_q-1),
\end{equation}
as the notation suggests. The following example finally introduces the new WBA
that plays the role of $\hat{GL}_q(2)$ in this picture.

\begin{example}
\label{ex_fusion}
Let $\hat{M}_q(2)$ be as above and $G\subseteq\hat{M}_q(2)$ be the monoid
generated by $\det_q$. $G$ therefore consists of central group-like
elements and is generated by an element of infinite
order. Example~\ref{ex_localizecentral2} then provides the right WBA of
fractions as a quotient of a polynomial algebra:
\begin{equation}
\hat{GL}_q(2) = \hat{M}_q(2)[G^{-1}] \cong \hat{M}_q(2)[X]/(\det_q X-1).
\end{equation}
Note that neither $\hat{M}_q(2)$ nor $\hat{GL}_q(2)$ are bialgebras. Similarly
to $GL_q(2)$, $\hat{GL}_q(2)$ forms a WHA, and so it already agrees with the
weak Hopf envelope $\bar{\hat{M}_q(2)}$.
\end{example}

We can use this example in order to obtain examples of WBAs of fractions for
almost central, but not central denominators and of WBAs of fractions that use
non-regular denominators. The most general example that probes all key
assumptions of Theorem~\ref{thm_localizewba2}, is the following.

\begin{example}
\label{ex_glfusion}
Let $r\in\{3,4,5,\ldots\}$, $\epsilon\in\C$ be a primitive $4r$-th root of
unity, $q=\epsilon^2$ and $0\neq\alpha\in\C$. We denote by $W_{\C,\alpha}$ the
coquasi-triangular bialgebra underlying Sweedler's Hopf algebra of
Example~\ref{ex_sweedler}, by $H_4$ the commutative bialgebra of
Example~\ref{ex_nonreg}, and by $\hat{M}_q(2)$ the coquasi-triangular WBA of
Example~\ref{ex_fusion}. Then $H=W_{\C,\alpha}\otimes H_4\otimes\hat{M}_q(2)$
is a coquasi-triangular WBA that is neither commutative nor a bialgebra.

Let $G\subseteq H$ be the monoid generated by $f\otimes\bar 1\otimes 1$,
$y\otimes\bar 1\otimes 1$, $1\otimes\bar 0\otimes 1$ and $1\otimes\bar
1\otimes\det_q$. Then $G$ contains the element $f\otimes 1\otimes 1$ which is
not central and the element $1\otimes\bar 0\otimes 1$ which is not
regular. Theorem~\ref{thm_localizewba2} is now required in full generality and
allows us to construct the right WBA of fractions which turns out to be
isomorphic to
\begin{equation}
H[G^{-1}] \cong W_{\C,\alpha}\otimes\hat{GL}_q(2).
\end{equation}
as WBAs. Note that $H[G^{-1}]$ is neither commutative nor a bialgebra either.
\end{example}

\begin{remark}
In the broader context of a research programme, Example~\ref{ex_fusion}
can be seen as the main motivation for the present article. From our work on
fusion categories~\cite{Pf11}, we know that we can obtain a dimension graph
$\sym{G}$ from each fusion category $\sym{C}$ with a choice of a monoidal
generating object $M\in|\sym{C}|$. There is a `free' WBA $H[\sym{G}]$
associated with $\sym{G}$ and a quotient of $H[\sym{G}]$ modulo some
$RTT$-relations that plays the role of $\hat M_q(2)$. The WHA $H$ that
characterizes the fusion category $\sym{C}$, is finally a quotient of $\hat
M_q(2)$ modulo setting certain group-like elements to $1$.

We can now ask whether we can turn this argument upside down and approach the
question of classifying (or at least systematically constructing plenty of)
fusion categories as follows. Start with a finite directed graph
$\sym{G}$. Consider the WBA $H[\sym{G}]$ and study solutions to the weak
version of the Quantum Yang--Baxter equation which yields $RTT$-relations that
can be imposed. Any finite-dimensional quotient of the weak Hopf envelope of
the resulting quotient WBA then produces a multi-fusion category as its
category of finite-dimensional comodules.
\end{remark}

\subsubsection*{Acknowledgements}

We are grateful to Ralph on \textbf{math}\textsl{overflow} for directing us to
relevant literature and to Yorck Sommerh{\"a}user for correspondence.

\newenvironment{hpabstract}{%
\renewcommand{\baselinestretch}{0.2}
\begin{footnotesize}%
}{\end{footnotesize}}%
\newcommand{\hpeprint}[2]{%
\href{http://www.arxiv.org/abs/#1}{\texttt{arxiv:#1#2}}}%
\newcommand{\hpspires}[1]{%
\href{http://www.slac.stanford.edu/spires/find/hep/www?#1}{SPIRES Link}}%
\newcommand{\hpmathsci}[1]{%
\href{http://www.ams.org/mathscinet-getitem?mr=#1}{\texttt{MR #1}}}%
\newcommand{\hpdoi}[1]{%
\href{http://dx.doi.org/#1}{\texttt{DOI #1}}}%
\newcommand{\hpjournal}[2]{%
\href{http://dx.doi.org/#2}{\textsl{#1\/}}}%

\end{document}